\documentclass[12pt, twoside]{amsart}
\usepackage{graphicx}
\usepackage{amsfonts}
\usepackage{amsthm}
\usepackage{amsmath}
\usepackage{enumerate}
\usepackage{a4wide}


\newcommand{\brc}[1]{\left\lbrace #1 \right\rbrace}

\usepackage{color}
\definecolor{Red}{rgb}{1,0,0}

\newcommand{\Nat}{\mathbb{N}}

\newcommand{\Real}{\mathbb{R}}

\newcommand{\To}{\rightarrow}

\newtheorem{thm}{Theorem}

\theoremstyle{definition}
\newtheorem{defn}[thm]{Definition}
\newtheorem{rem}[thm]{Remark}
\newtheorem{exmp}[thm]{Example}

\newcommand{\upor}{\mathcal{T}}

\setlength{\parskip}{1ex}
\setlength{\parindent}{0pt}



\begin{document}

\title[Evolutionary Games on Graphs and Discrete Dynamical Systems]{Evolutionary Games on Graphs  \\ and Discrete Dynamical Systems}
\author{Jeremias Epperlein}
\address{Technische Universit\"{a}t Dresden, Department of Mathematics, Center for Dynamics \& Institute for Analysis, 01062 Dresden, Germany}
\email{jeremias.epperlein@tu-dresden.de}

\author{Stefan Siegmund}
\address{Technische Universit\"{a}t Dresden, Department of Mathematics, Center for Dynamics \& Institute for Analysis, 01062 Dresden, Germany}
\email{stefan.siegmund@tu-dresden.de}

\author{Petr Stehl\'\i k}
\address{University of West Bohemia, Faculty of Applied Sciences, Univerzitn\'\i\ 22, 30614 Plze\v{n}, Czech Republic}
\email{pstehlik@kma.zcu.cz}

\begin{abstract}
Evolutionary games on graphs play an important role in the study of evolution of cooperation in applied biology. Using rigorous mathematical concepts from a dynamical systems and graph theoretical point of view, we formalize the notions of attractor, update rules and update orders. We prove results on attractors for different utility functions and update orders. For complete graphs we characterize attractors for synchronous and sequential update rules. In other cases (for $k$-regular graphs or for different update orders) we provide sufficient conditions for attractivity of full cooperation and full defection. We construct examples to show that these conditions are not necessary. Finally, by formulating a list of open questions we emphasize the advantages of our rigorous approach.

\medskip\noindent
 \textbf{Keywords:} evolutionary games on graphs; discrete dynamical systems; nonautonomous dynamical systems; attractors; cycles; cooperation; game theory; defection

\medskip\noindent
 \textbf{MSC 2010 subject classification:} 05C90; 37N25; 37N40; 91A22
 \end{abstract}

\maketitle

\section{Introduction}
Game theory was developed in the 1940s as a mathematical tool to study interactions and decisions of rational agents \cite{aNash}. For a long time it was mainly applied in economics (see e.g.\ \cite{bDixit, bMyerson} and the references therein). In the 1970s, it was introduced to biology via the concept of biological fitness and natural selection \cite{aMaynard} and the evolutionary game theory enabled to study infinite homogeneous populations via replicator equations  \cite{aHofbauer, bHofbauer}. Recently, dynamics in populations which are finite and spatially structured (the structure being represented by graphs) attracted a lot of attention \cite{aNowak, aSzabo}. Evolutionary game theory on graphs has shown that the rate of cooperation strongly depends on the structure of the underlying interaction graph (see e.g.\ \cite{aHauert, aLieb, aNowak, aOhtsuki, aOhtsuki2, aSzabo} and numerous following papers). At the same time, the similar problem of equilibria selection in cooperation games via timing structures has attracted a lot of attention, both in microeconomics (e.g.\ \cite{aKandori, aLagunoff}), as well as in macroeconomics (e.g.\ \cite{aLibich, aLibich2}).

Mathematically, evolutionary games on graphs are very complex structures which bring together notions from graph theory, game theory, dynamical systems and stochastic processes. Mathematical techniques which are used are therefore complex and include complex approximative techniques like pair and diffusion approximation and voter model perturbations \cite{aAllen, aChen, bCox} or are limited to special classes of graphs, e.g., cycles \cite{aOhtsuki}, stars \cite{aBroom, aBroom2} and vertex-transitive graphs \cite{aAllenNowak, aDebarre}. Whereas these papers focus mainly on stochastic updating of vertices, the goal of this paper is to study evolutionary games on graphs with various deterministic update rules in the framework of discrete dynamical systems and formalize the notions of (autonomous and nonautonomous) evolutionary games on graphs, attractors, update rules and update orders. Being aware that this approach may look too formal for some, we believe that this approach should  help to (i) disclose and answer interesting questions (some of them listed in Section \ref{s:oq}), (ii) understand patterns by means of simple examples and counterexamples, (iii) describe analytically dynamics on small graphs (motivated e.g.\ by small sizes of agents in  cooperation games in macroeconomics \cite{aLibich2}) and (iv) bridge a gap between three seemingly separate mathematical areas: graph theory, game theory and dynamical systems. 

In Section \ref{s:egame}, we introduce, for a given graph and an arbitrary utility function, a new formal notion of an (autonomous) evolutionary game on a graph (Definition \ref{d:evgame}) and attractors (Definition \ref{d:attractor}). In our rigorous approach, we are trying to follow the spirit of \cite{aHauert, aLieb, aOhtsuki, aOhtsuki2} and \cite{aSzabo}. In Section \ref{s:gt} we introduce two basic utility functions and relate them to the underlying game-theoretical parameters. In Section \ref{s:Kn} we characterize the attractivity of full defection (Theorem \ref{t:Kn:0}) and full cooperation (Theorem \ref{t:Kn:1}) on complete graphs. On $k$-regular graphs we provide only a sufficient condition for attractivity of these states (Theorem \ref{t:kreg}) and show (in Example \ref{x:Cayley}) that it is not necessary. Section \ref{s:nonautonomous} is devoted to the extension of the notion of an evolutionary game on a graph to the realistic situation that the vertices are not all updated at each time step (Definition \ref{d:update:order}). The new notion of nonautonomous evolutionary game on a graph (Definition \ref{d:nagame}) has the structure of a general nonautonomous dynamical system (Definition \ref{d:nads}). We also introduce attractors and their basins for nonautonomous evolutionary games (Definitions \ref{d:na:attractor} and \ref{d:nabasin}) and relate them to the autonomous case (Remark \ref{r:consistency}). In Section \ref{s:Kn:na} we provide conditions for attractivity of full defection and full cooperation of nonautonomous evolutionary games. The conditions are sufficient for non-omitting update orders but also necessary if a sequential update order is considered (Theorem \ref{t:Kn:0:na}). Example \ref{x:Hella:2} shows that in general the conditions are not necessary. In Section \ref{s:update:order} we provide a complete characterization of attractors of evolutionary games on complete graphs for synchronous (Theorem \ref{t:hd:a}) and sequential (Theorem \ref{t:hd:na}) update orders. In particular, Theorem \ref{t:hd:na}(c) (together with Example \ref{x:cycle}) shows the existence of an attractive cycle. In Section \ref{s:wheels} we discuss the role of different utility functions on irregular graphs. Finally, Section \ref{s:oq} is devoted to concluding remarks and open questions.
\section{Evolutionary games on graphs}\label{s:egame}
We recall some basic definitions from graph theory, see e.g.\ \cite{aGodsil}. A \emph{graph} $G=(V,E)$ is a pair
consisting of a set of \emph{vertices} $V$ and a set of \emph{edges} $E \subseteq \{ e \subseteq V : |e| = 2\}$.
Let $N_k(i)$ denote the \emph{$k$-neighbourhood} of vertex $i\in V$, i.e.\ all vertices with the distance exactly $k$ from $i$. Furthermore, let us define
\[
N_{\leq k}(i) = \bigcup_{j=0}^{k} N_j(i).
\]



We utilize basic concepts
from dynamical systems theory (see e.g.\ \cite{2003:Devaney}).

\begin{defn}\label{d:autonomous-dyna}
Taking into account that our state space is discrete, we strip off any topological
properties and call a map
\[
  \varphi : \Nat_0 \times X \rightarrow X
\]
on an arbitrary set $X$ a \emph{dynamical system} with discrete \emph{one-sided time} $\Nat_0$, if it satisfies the \emph{semigroup property}
\[
  \varphi(0,x) = x
  \quad \textup{and} \quad
  \varphi(t, \varphi(s, x)) = \varphi(t+s, x)
  \quad \textup{for all }
  t, s \in \Nat_0, x \in X  .
\]
The set $X$ is called the \emph{state space} and $\varphi(\cdot) = \varphi(1,\cdot) : X
\rightarrow X$ the corresponding \emph{time-1-map}.
\end{defn}

Any map $\varphi : X \rightarrow X$ is the time-1-map of an induced dynamical system with discrete
one-sided time $\Nat_0$ which is defined via composition or iteration as
follows
\[
  \Nat_0 \times X \rightarrow X
  , \quad
  (t,x) \mapsto \varphi^t(x)
  ,
\]
where $\varphi^0 = \mathrm{id}$ and $\varphi^t = \varphi \circ \cdots \circ \varphi$. In
this paper we describe dynamical systems via their time-1-map.

We will define evolutionary games on graphs as special dynamical systems with the property that a vertex follows a strategy in its 1-neighbourhood which currently yields the highest utility. To prepare this definition we introduce for a ``function on a graph'' the size of the neighbourhood on which its values depend.

\begin{defn}
Let $M, S$ be arbitrary sets and $G=(V,E)$ a graph. We say that a function $f:S^V\rightarrow M^V$ has a \emph{dependency radius} $r \in \Nat_0$ on $G$ if all values $f_i(x)$ of a component $f_i$ of $f$ depend on a component $x_j$ of the argument $x = (x_1,\dots,x_{|V|})$ only if the vertex $j \in V$ is in the $r$-neighbourhood of the vertex $i \in V$, i.e.\ if for each $x,y\in S^V$ and $i,j\in V$ the following implication holds:
\[
  \left\langle \forall k\in V\setminus \lbrace j\rbrace : x_k=y_k \right\rangle 
  \mathrm{ and } 
  \left\langle f_i(x)\neq f_i(y) \right\rangle 
  \;\Rightarrow\; 
  j\in N_{\leq r}(i).
\]
\end{defn}

We are now in a position to formulate evolutionary games on graphs with unconditional imitation update rule  as a dynamical system. This update rule goes back to \cite{aNowak} and is also called ``imitate-the-best''. The basic idea is that at each time step every player determines his utility based on his strategy and the strategies of his neighbours. Based on that, each player adopts the strategy of his neighbour with the highest utility (if it is greater than his own).

\begin{defn}\label{d:evgame}
Let $S$ be an arbitrary set. An \emph{evolutionary game} on $G=(V,E)$ consists of the following two ingredients:

(i) a \emph{utility function} on $G=(V,E)$, i.e.\ a function $u:S^V\rightarrow \Real^V$ which has dependency radius $1$ on $G$,

(ii) the \emph{dynamical system} $\varphi:S^V\rightarrow S^V$ with $\varphi_i:=\mathrm{proj}_i \circ \varphi : S^V\rightarrow S$ given by
\begin{equation}\label{e:imitation}
\varphi_i(x) = \begin{cases}
x_{\max} & \textrm{if } \left|A_i(x)\right|=1 \textrm{ and } A_i(x)=\lbrace x_{\max} \rbrace,\\
x_i & \textrm{if } \left|A_i(x)\right| > 1,
\end{cases}
\end{equation}
where the set $A_i(x)$ is defined by
\begin{equation}\label{e:setAi}
A_i(x) = \left\lbrace x_k: k\in \mathrm{argmax}\left\lbrace u_j(x): j\in  N_{\leq 1}(i) \right\rbrace \right\rbrace.
\end{equation}
\end{defn}

\begin{rem}\label{r:evgame}
(i) Typically, $S$ represents the set of strategies (e.g., $C$ and $D$) and the vector $x=(x_1,\dots,x_{|V|})\in S^V$ the population state (i.e., the spatial distribution of strategies at a given time).

(ii) The cardinality of $A_i(x)$ in \eqref{e:imitation} is used to ensure that all vertices with the highest utility have the same state. If that is not the case, the vertex preserves its current state. Obviously, this may not be a reasonable approach once the set of strategies $S$ contains more than two strategies and the current state may yield worse payoff than all other strategies.

(iii) An evolutionary game $\varphi$ on $G$ has dependency radius $2$ on $G$.

(iv) Whereas the notion of evolutionary games in biological applications (see e.g.\ \cite{aOhtsuki}) sometimes has a probabilistic aspect, our Definition \ref{d:evgame} of an evolutionary game is deterministic. Note that, although we do not follow that direction in this paper, in principle it is possible to extend Definition \ref{d:evgame} to also incorporate that a vertex follows strategies in its 1-neighbourhood with a certain probability.

(v) Given the fact, that each vertex mimics the state of its neighbour with the highest utility, we speak about  \emph{imitation dynamics}. Preserving the deterministic nature, there are various possibilities of defining the dynamics. For example, instead of imitation dynamics $
\varphi^I:=\varphi$ in \eqref{e:imitation}, we could consider \emph{deterministic death-birth} dynamics, in which only the vertices with the lowest utility adopt the state of its neighbour with highest utility, i.e.
\[
\varphi^{DB}_i(x) = \begin{cases}
x_{\max} & \textrm{if } u_i(x)=\min\limits_{j\in V} u_j(x), \left|A_i(x)\right|=1 \textrm{ and } A_i(x)=\lbrace x_{\max} \rbrace ,\\
x_i & \textrm{if } u_i(x)>\min\limits_{j\in V} u_j(x) \textrm{ or }\left|A_i(x)\right|>1.
\end{cases}
\]
Alternatively, we could consider \emph{deterministic birth-death} dynamics in which only vertices in the neighbourhood of vertices with the highest utility are updated, i.e.
\[
\varphi^{BD}_i(x) = \begin{cases}
x_{\max} & \textrm{if } \left|A_i(x)\right|=1,  \max\limits_{j\in N_1(i)} u_j(x)=\max\limits_{j\in V} u_j(x) \textrm{ and } A_i(x)=\lbrace x_{\max} \rbrace ,\\
x_i & \textrm{if } \left|A_i(x)\right| > 1  \textrm{ or }  \max\limits_{j\in N_1(i)} u_j(x)<\max\limits_{j\in V} u_j(x).
\end{cases}
\]
We leave the analysis of such dynamical systems for further research and focus on evolutionary games with imitation dynamics \eqref{e:imitation}.
\end{rem}

We define a notion of distance $d$ in the state space $S^V$ by $d:S^V \times S^V \rightarrow \Real$ with $S\subset\Real$ by
\[
d(x,y):= \left|\lbrace{i\in V : x_i \neq y_i \rbrace} \right|.
\]

Similarly, the distance of a state $x\in S^V$ from a set of states $B\subset S^V$ will be denoted by $\mathrm{dist}: S^V \times 2^{S^{V}} \rightarrow \Real$ and defined by
\[
\mathrm{dist}(x,B) :=\mathrm{inf} \left\lbrace d(x,y): y\in B \right\rbrace.
\]

\begin{defn}\label{d:attractor}
Let $\varphi:S^V\rightarrow S^V$ be an evolutionary game on $G=(V,E)$ and $A\subset S^V$ \emph{invariant} under $\varphi$, i.e.\ $\varphi(A) = A$. Then $A \neq \emptyset$ is called \emph{attractor} of $\varphi$ if for any $x\in S^V$ with $\mathrm{dist}(x,A)\leq 1$ there exists $t\geq 0$ such that $\varphi^t(x)\in A$.

If $A=S^V$, we say that $A$ is the \emph{trivial attractor}, otherwise $A$ is said to be a \emph{nontrivial attractor}.
\end{defn}

\begin{rem}
If $S$ and $V$ are finite sets, then $d$ generates the discrete topology on $S^V$. With this topology an invariant set $A$ is an attractor if and only if $ \lim\limits_{t\rightarrow\infty} \mathrm{dist}\left( \varphi^t(x),A \right)=0$ for all $x\in S^V$ with $\mathrm{dist}(x,A)\leq 1$.
\end{rem}

%
\section{Utility Function and Cooperative Games}\label{s:gt}

We discuss cooperative evolutionary games (see e.g.\ \cite{bNowak}) with utilities which are implied by a static game (see e.g.\ \cite{bDixit} for an introduction to game theory) on a state space $S=\lbrace C,D \rbrace$ consisting of two strategies, each vertex can either cooperate ($C$) or defect ($D$).
\begin{center}
\begin{tabular}{c|cc}
& C & D \\ \hline
C & $a$ & $b$ \\
D & $c$ & $d$ \\
\end{tabular}
\end{center}
Consequently, a player gets $a$ if both he and his partner cooperate, he gets $b$ if he cooperates and his partner defects, if he defects and his partner cooperates he gets $c$, if both players defect, he gets $d$.
We focus on cooperation games and therefore make the following assumptions on the parameters $a,b,c,d$:
\begin{itemize}
	\item[(A1)] For the sake of brevity, we assume that no two parameters are equal.
	\item[(A2)] It is always better if both players cooperate than if they both defect, i.e.\ $a>d$.
	\item[(A3)] If only one cooperates, it is more advantageous to be the defector, i.e.\ $c>b$.
	\item[(A4)] No matter what strategy a player chooses, it is always better for him if his opponent cooperates, i.e.\ $a>b$ and $c>d$.
	\item[(A5)] $a,c$ are positive, i.e.\ there is a positive reward for cooperation.
\end{itemize}

\begin{defn}
We say that a parameter vector $(a,b,c,d)$ is \emph{admissible} if it satisfies assumptions (A1)-(A5). The set $\mathcal{P} \subset \Real^4$ of all such quadruplets is called the \emph{set of admissible parameters}.
\end{defn}

The set $\mathcal{P}$ of admissible parameters splits into four regions corresponding to scenarios which we call Full cooperation, Hawk and dove, Stag hunt\footnote{Strictly speaking, the Stag hunt scenario is usually considered with $a>d>c>b$, we use this notation, since the equilibria share the same structure.} and Prisoner's dilemma:
\begin{align*}
\mathcal{P}_{\mathrm{FC}} &= \lbrace{(a,b,c,d)\in\Real^4: a>c>b>d \rbrace}, \\
\mathcal{P}_{\mathrm{HD}} &= \lbrace{(a,b,c,d)\in\Real^4: c>a>b>d \rbrace}, \\
\mathcal{P}_{\mathrm{SH}} &= \lbrace{(a,b,c,d)\in\Real^4: a>c>d>b \rbrace}, \\
\mathcal{P}_{\mathrm{PD}} &= \lbrace{(a,b,c,d)\in\Real^4: c>a>d>b \rbrace}.
\end{align*}
Obviously,
\begin{equation}\label{e:P}
\mathcal{P} = \mathcal{P}_{\mathrm{FC}} \cup \mathcal{P}_{\mathrm{HD}} \cup \mathcal{P}_{\mathrm{SH}} \cup \mathcal{P}_{\mathrm{PD}}.
\end{equation}
In the case of static games, a (mixed) Nash equilibirum for two player games is a pair of mixed strategies $(\sigma_1^*,\sigma_{2}^*)$ such that
\begin{align*}
u_1(\sigma_1^*,\sigma_{2}^*) &\geq (\sigma_1,\sigma_{2}^*), \quad \textrm{for all } \sigma_1\in\Sigma_1,\\
u_2(\sigma_1^*,\sigma_{2}^*) &\geq (\sigma_1^*,\sigma_{2}), \quad \textrm{for all } \sigma_2\in\Sigma_2,
\end{align*}
where $\Sigma_i$ denotes the set of all mixed strategies of player $i$ (see \cite{bDixit} for more details). In the admissible regions the Nash equilibria have the following structure.

\begin{center}
\begin{tabular}{ccccc}
 & abbr. & scenario & Nash equilibria \\[0.5ex] \hline
$a>c>b>d$ & FC & Full cooperation & (C,C) \\
$c>a>b>d$ & HD & Hawk \& Dove & (C,D), (D,C) and a mixed equilibrium \\
$a>c>d>b$ & SH & Stag hunt & (C,C), (D,D) and a mixed equilibrium \\
$c>a>d>b$ & PD & Prisoner's dilemma & (D,D)
\end{tabular}
\end{center}

Note that the mixed Nash equilibirum $\frac{d-b}{(a-c)+(d-b)}$ provides the minimal payoff in the SH game and the maximal payoff in the HD game. As we will see, this fact influences the stability of corresponding interior points of, e.g., replicator dynamics, see \cite{bHofbauer}. 

There are two natural ways how to define utilities on evolutionary graphs with $S=\lbrace 0,1 \rbrace$ (the state $1$ corresponds to $C$ and the state $0$ to $D$). We could either consider the \emph{aggregate utility}
\begin{equation}\label{e:utility}
u_i^A(t)=a\sum\limits_{j\in N_1(i)} s_is_j + b \sum\limits_{j\in N_1(i)} s_i(1-s_j) + c\sum\limits_{j\in N_1(i)} (1-s_i)s_j+ d\sum\limits_{j\in N_1(i)} (1-s_i)(1-s_j),
\end{equation}
or the \emph{mean utility}
\begin{equation}\label{e:utility:M}
u_i^M(t)=\frac{1}{\vert N_1(i)\vert}u_i^A(t).
\end{equation}

Note, that on regular graphs $\vert N_1(i) \vert$ is constant and both utilities yield the same evolutionary games. However, on irregular graphs, this is no longer true (see Section \ref{s:wheels}) and one could argue which utility is more realistic.

\begin{figure}
\begin{minipage}{0.4\textwidth}
\begin{center}
\includegraphics[width=\textwidth]{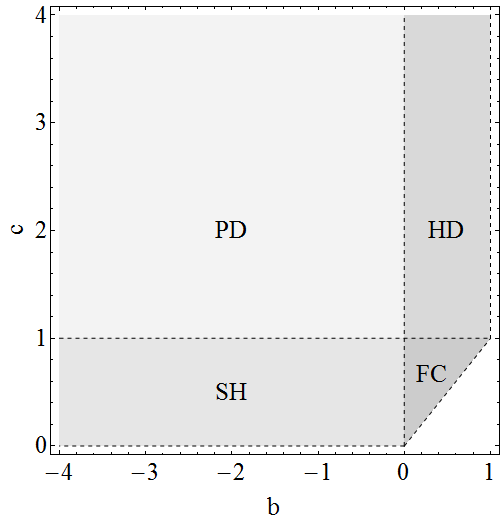}\\
\end{center}
\end{minipage}
\caption{Set of admissible parameters and four game-theoretic scenarios, $a=1$ and $d=0$.}\label{f:scen}
\end{figure}

\begin{rem}\label{r:norm}
Once we consider the mean utility function (or the aggregate utility function on regular graphs) we could, without loss of generality, normalize parameters $a,b,c,d$ so that $\tilde{a}=1$ and $\tilde{d}=0$ by the following map
\[
\tilde{x}=\frac{x-d}{a-d},\quad x=a,b,c,d.
\]
Conversely, given normalized values of parameters $\tilde{a},\tilde{b},\tilde{c},\tilde{d}$, we could for arbitrary $a$ and $d$ such that $a>d$ construct non-normalized values of parameters by
\[
x= d+(a-d)\tilde{x},\quad x=a,b,c,d.
\]

This allows us to simplify conditions or plot regions corresponding to various scenarios, cf. Figure \ref{f:scen} where four scenarios are depicted.
\end{rem}

\section{Evolutionary Games on $K_n$}\label{s:Kn}
In this section we consider evolutionary games generated by the simplest graphs -- complete graphs $K_n$ and regular graphs. Note that evolutionary games on $K_n$ correspond to dynamics of a well-mixed (nonspatial) population. We focus on the attractivity of full defection $(0,0,\ldots,0)$ and full cooperation $(1,1,\ldots,1)$ and its connection to parameters $a,b,c,d$.
\begin{thm}\label{t:Kn:0}
For all admissible $(a,b,c,d)\in \mathcal{P}$ the following two statements are equivalent:
\begin{enumerate}[\upshape (a)]
\item $\lbrace (0,0,\ldots,0)\rbrace$ is an attractor of the evolutionary game $\varphi$ on $K_n$ with the utility function \eqref{e:utility}.
\item $(a,b,c,d)\in \mathcal{P}$ satisfy
	\begin{equation}\label{e:Kn:0}
		b<d \quad \mathrm{or} \quad n<1+\frac{c-d}{b-d}.
	\end{equation}
\end{enumerate}
\end{thm}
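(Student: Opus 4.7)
The plan is to exploit the fact that on $K_n$ every vertex has the same $1$-neighbourhood (namely the full vertex set), so that the imitation rule \eqref{e:imitation} acts in a highly symmetric way. Since $(0,\ldots,0)$ is trivially invariant under $\varphi$, proving attractivity reduces, by Definition \ref{d:attractor}, to analysing $\varphi$ on states $x$ with exactly one coordinate equal to $1$; the case $x=(0,\dots,0)$ is immediate. So I would fix such an $x$ with the unique cooperator at vertex $i^*$ and compute one step of the dynamics.

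On $K_n$, the aggregate utility \eqref{e:utility} takes only two values: the cooperator $i^*$ has utility $u_{i^*}(x)=b(n-1)$, while every defector $j\neq i^*$ has utility $u_j(x)=c+d(n-2)$. Since $N_{\leq 1}(i)=V$ for every $i\in V$, all vertices see the same set $A_i(x)$ given by \eqref{e:setAi}. I would then split into three mutually exclusive cases:
\begin{itemize}
\item If $c+d(n-2)>b(n-1)$, the argmax equals $V\setminus\{i^*\}$, hence $A_i(x)=\{0\}$ and $\varphi(x)=(0,\dots,0)$.
\item If $c+d(n-2)<b(n-1)$, the argmax equals $\{i^*\}$, hence $A_i(x)=\{1\}$ and $\varphi(x)=(1,\dots,1)$, after which utilities are constant equal to $a(n-1)$, $A_i=\{1\}$, and the orbit is trapped at full cooperation.
\item If $c+d(n-2)=b(n-1)$, the argmax equals $V$, so $A_i(x)=\{0,1\}$, $|A_i(x)|=2$, and $\varphi(x)=x$; the orbit is stuck.
\end{itemize}
Thus $\{(0,\dots,0)\}$ is an attractor if and only if the strict inequality $c+d(n-2)>b(n-1)$ holds, equivalently $(b-d)(n-1)<c-d$.

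The last step is to translate $(b-d)(n-1)<c-d$ into the form \eqref{e:Kn:0}. Assumption (A4) gives $c>d$, so $c-d>0$; assumption (A1) gives $b\neq d$. If $b<d$, the left-hand side is non-positive while the right-hand side is positive, so the inequality holds automatically. If $b>d$, dividing by the positive quantity $b-d$ yields $n-1<\frac{c-d}{b-d}$, i.e.\ the second alternative in \eqref{e:Kn:0}. Conversely, either alternative implies $(b-d)(n-1)<c-d$. This establishes the equivalence of (a) and (b). The calculations are elementary; the only delicate point is making sure the tie case $c+d(n-2)=b(n-1)$ is correctly ruled out by Definition \ref{d:attractor}, since it is compatible with the admissibility assumptions and genuinely breaks attractivity.
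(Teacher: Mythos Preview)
Your proof is correct and follows essentially the same approach as the paper's: both arguments compute the two utility values $u_{i^*}(x)=(n-1)b$ and $u_j(x)=c+(n-2)d$ for a state with a single cooperator, and then analyse the three cases according to the sign of their difference. Your write-up is in fact slightly more explicit than the paper's in checking the invariance of $(1,\dots,1)$ in the second case and in deriving the equivalence with \eqref{e:Kn:0} via assumptions (A1) and (A4).
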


\begin{proof}
Choose $x\in S^V=\lbrace{0,1\rbrace}^V$ such that $d \left( x, (0,0,\ldots,0)\right) = 1$. Then there exists a unique $i\in V$ such that $x_i=1$ and $x_j=0$ for all $j\neq i$ (i.e.\ $i$ is the unique cooperator). Consequently, the utilities are:
\begin{align*}
u_i(x) &= (n-1)b, \\
u_j(x) &= c+ (n-2)d.
\end{align*}

$(b) \Rightarrow (a)\;$
The inequalities \eqref{e:Kn:0} imply that
\[
u_i(x)-u_j(x) = (n-1)b - c - (n-2)d = n (b-d) - (b-d) - (c-d) < 0.
\]
Therefore, $u_j(x)>u_i(x)$ for all $j\neq i$. Hence, $\varphi_j(x)=0$ for all $j\in V$ and $\varphi(x)=(0,0,\ldots,0)$.

$(a) \Rightarrow (b)\;$
Assume that \eqref{e:Kn:0} does not hold, i.e.\footnote{Note, that $b=d$ is not admissible.}
\[
b>d \quad \mathrm{and} \quad n \geq 1+\frac{c-d}{b-d}.
\]
If $n=1+\frac{c-d}{b-d}$, then $u_i(x)=u_j(x)$. This implies that $\varphi_j(x)=x_j$ and $\varphi(x)=x$. If $n>1+\frac{c-d}{b-d}$, then $u_i(x)>u_j(x)$. This implies that $\varphi_j(x)=1$ and $\varphi(x)=(1,1,\ldots,1)$, which implies that $(0,0,\ldots,0)$  cannot be reached from $x$, since  $\varphi((1,1,\ldots,1))=(1,1,\ldots,1)$.
\qedhere
\end{proof}

A similar result could be obtained for the full cooperation state $(1,1,\ldots,1)$.

\begin{thm}\label{t:Kn:1}
For all admissible $(a,b,c,d)\in \mathcal{P}$ the following two statements are equivalent:
\begin{enumerate}[\upshape (a)]
\item $\lbrace (1,1,\ldots,1)\rbrace$ is an attractor of the evolutionary game $\varphi$ on $K_n$ with the utility function \eqref{e:utility}.
\item $(a,b,c,d)\in \mathcal{P}$ satisfy
	\begin{equation}\label{e:Kn:1}
		a>c \quad \mathrm{and} \quad n>1+\frac{a-b}{a-c}.
	\end{equation}
\end{enumerate}
\end{thm}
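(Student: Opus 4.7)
The plan is to mirror the proof of Theorem \ref{t:Kn:0} almost verbatim, exchanging the roles of the strategies. Pick an arbitrary $x \in \{0,1\}^V$ with $d(x,(1,\ldots,1)) = 1$, i.e.\ there is a unique defector $i \in V$ and every other vertex cooperates. Since $K_n$ is complete, $N_{\leq 1}(k) = V$ for every $k$, so the same two utility values show up everywhere: using \eqref{e:utility}, the lone defector gets $u_i(x) = (n-1)c$, while each cooperator $j \neq i$ has one defector neighbour and $n-2$ cooperator neighbours, giving $u_j(x) = (n-2)a + b$.

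For the implication $(b) \Rightarrow (a)$, I would show that \eqref{e:Kn:1} is equivalent to $u_j(x) > u_i(x)$ for any cooperator $j$. Indeed, since $a > c$ makes $a - c > 0$, the inequality $n > 1 + \frac{a-b}{a-c}$ rearranges to $(n-2)a + b > (n-1)c$. Hence the unique argmax in \eqref{e:setAi} consists entirely of cooperators, so $A_k(x) = \{1\}$ for every $k \in V$ and $\varphi(x) = (1,\ldots,1)$. Invariance of $\{(1,\ldots,1)\}$ is immediate, since from the all-ones state all utilities equal $(n-1)a$ and \eqref{e:imitation} fixes the state.

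For $(a) \Rightarrow (b)$ I would argue by contraposition, splitting the negation of \eqref{e:Kn:1} into the cases dictated by admissibility (recall $a \neq c$ by (A1)). If $a < c$, then combined with $a > b$ (by (A4)) one gets $u_i(x) = (n-1)c > (n-1)a > (n-2)a + b = u_j(x)$, so the argmax is $\{x_i\} = \{0\}$, every vertex copies the defector, and $\varphi(x) = (0,\ldots,0)$, a fixed point from which $(1,\ldots,1)$ is unreachable. If $a > c$ but $n \leq 1 + \frac{a-b}{a-c}$, then either equality holds, giving $u_i(x) = u_j(x)$ so that $|A_k(x)| > 1$ for all $k$ and $\varphi(x) = x$ stays at distance $1$ from $(1,\ldots,1)$; or strict inequality holds, leading as in the previous case to $\varphi(x) = (0,\ldots,0)$. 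In either subcase the orbit of $x$ never reaches $(1,\ldots,1)$, contradicting attractivity.

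I do not expect any real obstacle: the algebra rearranging \eqref{e:Kn:1} into $(n-2)a + b > (n-1)c$ is routine, and the main care is just to enumerate the cases of the negation cleanly and to invoke admissibility (in particular (A1) and (A4)) to exclude the borderline $a = c$ and to get the strict inequality $a > b$ used when $a < c$. Otherwise the argument is structurally identical to that of Theorem \ref{t:Kn:0}.
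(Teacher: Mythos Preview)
Your proposal is correct and follows essentially the same approach as the paper's own proof: compute the two utility values for a state at distance $1$ from $(1,\ldots,1)$, compare them, and read off the dynamics in each case. If anything, your contrapositive direction is cleaner than the paper's, since you separate the case $a<c$ explicitly and justify the strict inequality $u_i(x)>u_j(x)$ via (A4), whereas the paper simply asserts ``Otherwise, $\varphi(x)=(0,\ldots,0)$'' without spelling this out.
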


\begin{proof}
The proof is very similar to the proof of Theorem \ref{t:Kn:0}.

$(b) \Rightarrow (a)\;$ 
Choose $x\in S^V=\lbrace{0,1\rbrace}^V$ such that $d \left( x, (1,1,\ldots,1)\right) = 1$. Then there exists a unique $i\in V$ such that $x_i=0$ and $x_j=1$ for all $j\neq i$ (i.e.\ $i$ is the unique defector). Consequently, the utilities are:
\begin{align*}
u_i(x) &= (n-1)c, \\
u_j(x) &= b+ (n-2)a.
\end{align*}
Then \eqref{e:Kn:1} implies that
\[
u_i(x)-u_j(x) = (n-1)c - b - (n-2)a = -n(a-c)+(a-c)+(a-b) < 0.
\]
Therefore, $u_j(x)>u_i(x)$ for all $j\neq i$. Hence, $\varphi_j(x)=1$ for all $j\in V$ and $\varphi(x)=(1,1,\ldots,1)$.

$(a) \Rightarrow (b)\;$
Assume that \eqref{e:Kn:1} does not hold, i.e.
\[
a<c \quad \mathrm{or} \quad n \leq 1+\frac{a-b}{a-c}.
\]
If the equality $n = 1+\frac{a-b}{a-c}$ holds, then $\varphi(x)=x$. Otherwise, $\varphi(x)=(0,0,\ldots,0)$. Consequently, $(1,1,\ldots,1)$ is not attractive.
\qedhere
\end{proof}

\begin{figure}

\begin{minipage}{\textwidth}
\begin{center}
\begin{minipage}{0.45\textwidth}
\begin{center}
\includegraphics[width=\textwidth]{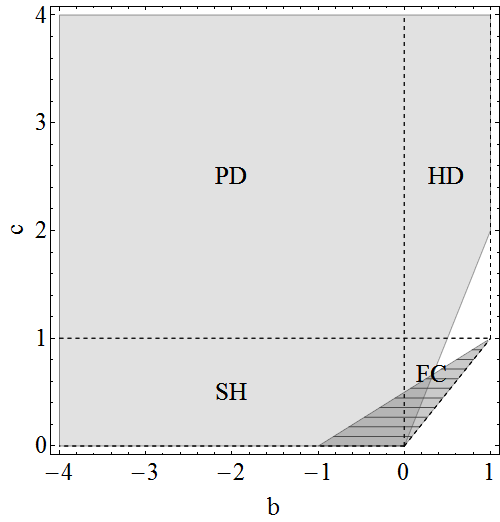}
\end{center}
\end{minipage}
\begin{minipage}{0.45\textwidth}
\begin{center}
\includegraphics[width=\textwidth]{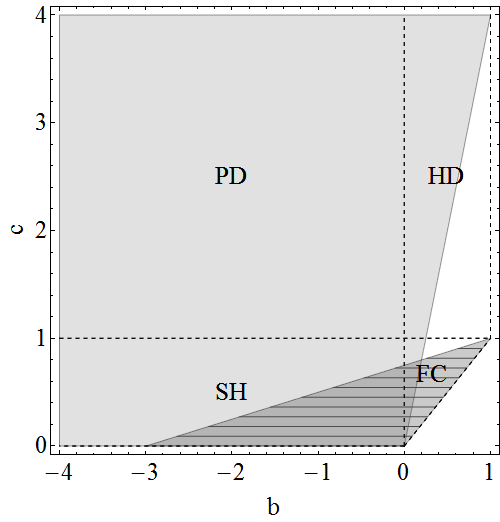}
\end{center}
\end{minipage}
\end{center}
\end{minipage}
\caption{Attractivity regions of $(0,0,\ldots,0)$ (light gray) and $(1,1,\ldots,1)$ (horizontally hatched) ($a=1$ and $d=0$ are fixed, cf.\ Remark \ref{r:norm}). (a) $n=3$ on the left and (b) $n=5$ on the right.}\label{f:att:reg}

\end{figure}

\begin{rem}
Theorems \ref{t:Kn:0} and \ref{t:Kn:1} show that attractivity of full defection and full cooperation is qualitatively different for admissible parameters in the four different regions FC, HD, SH, PD in \eqref{e:P}. In the PD case, there is no dependence on $n$ and the values of the parameters. On the other hand, in the other cases different behaviour could occur for different parameter values and in dependence of the size of the graph $n$. Evidently, the richest situation occurs in the FC case in which it is possible that, depending on the parameter, full defection and full cooperation, as well as none of them, or both of them together are attractive, cf.\ Table \ref{tab:k:Kn} and Figure \ref{f:att:reg}.

\begin{table}[h]
\begin{tabular}{ccccc}
& PD & HD & SH & FC \\ \hline
$(0,0,\ldots,0)$ attractive & always &$n<1+\frac{c-d}{b-d}$  & always & $n<1+\frac{c-d}{b-d}$ \\
$(1,1,\ldots,1)$ attractive & never & never & $n>1+\frac{a-b}{a-c}$ & $n>1+\frac{a-b}{a-c}$\\
\end{tabular}
\vspace*{1ex}
\caption{$K_n$ - Connection between graph size, different scenarios and attractivity of states $(0,0,\ldots,0)$ and $(1,1,\ldots,1)$}.\label{tab:k:Kn}
\end{table}

We observe that in the PD case, $(0,0,\ldots,0)$ is always attractive. Similarly, in the FC case $(1,1,\ldots,1)$ is the unique attractor if and only if $n > 1+ \max\left\lbrace \frac{c-d}{b-d}, \frac{a-b}{a-c} \right\rbrace$.

Finally, note that the results are consistent with those for infinite populations studied in the evolutionary game theory \cite{bHofbauer, bNowak}, in which the full cooperation is ESS if and only if $a>c$ and the full defection is ESS if and only if $b<d$ (those inequalities are obtained as $n\rightarrow\infty)$. Note that for finite complete graphs additional conditions involving the size $n$ are involved (see \eqref{e:Kn:0}-\eqref{e:Kn:1}). Thus, the parameter region in finite populations is larger for full defection and smaller for full cooperation, see Figure \ref{f:att:reg}.
\end{rem}

The ideas from the proofs of Theorems \ref{t:Kn:0} and \ref{t:Kn:1} easily carry over to $k$-regular graphs.
\begin{thm}\label{t:kreg}
Let $(a,b,c,d)\in \mathcal{P}$ be admissible and $G$ be a $k$-regular graph, $k\geq 2$. Then:
\begin{enumerate}[\upshape (a)]
	\item If
		\begin{equation}\label{e:kreg:0}
			k(b-d)<c-d,
		\end{equation}
		holds, then $\lbrace (0,0,\ldots,0)\rbrace$ is an attractor of the evolutionary game $\varphi$ on $G$ with the utility function \eqref{e:utility}.
	\item If
		\begin{equation}\label{e:kreg:1}
			k(a-c)>a-b,
		\end{equation}
		holds, then $\lbrace (1,1,\ldots,1)\rbrace$ is an attractor of the evolutionary game $\varphi$ on $G$ with the utility function \eqref{e:utility}.
\end{enumerate}
\end{thm}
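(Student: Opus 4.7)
My plan is to mimic the proofs of Theorems \ref{t:Kn:0} and \ref{t:Kn:1}, exploiting the $k$-regularity of $G$ to ensure that every relevant local utility computation depends only on $k$ and on adjacency to the unique ``minority'' vertex. Since the theorem only asserts sufficient conditions, it will be enough to show that from any state at distance $1$ from the target state one step of $\varphi$ already lands in the target, which is then easily seen to be a fixed point.

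For part (a), I would start with an arbitrary $x\in\{0,1\}^V$ satisfying $\mathrm{dist}(x,(0,\ldots,0))=1$, so there is a unique cooperator $i$. The $k$-regularity immediately yields $u_i(x)=kb$, $u_j(x)=c+(k-1)d$ for $j\in N_1(i)$, and $u_\ell(x)=kd$ for $\ell\notin N_{\leq 1}(i)$. The hypothesis \eqref{e:kreg:0} is equivalent to $u_i(x)<u_j(x)$, and admissibility ($c>d$) gives $u_j(x)>u_\ell(x)$. Inspecting each type of vertex $v\in V$ in turn, I would verify that the maximum of $u$ on $N_{\leq 1}(v)$ is attained only at defectors, so that $A_v(x)=\{0\}$ and hence $\varphi_v(x)=0$. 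Invariance of $\{(0,\ldots,0)\}$ is clear, since at that state every utility equals $kd$.

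Part (b) is symmetric around the unique defector $i$: one computes $u_i(x)=kc$, $u_j(x)=b+(k-1)a$ for $j\in N_1(i)$, and $u_\ell(x)=ka$ for $\ell\notin N_{\leq 1}(i)$; hypothesis \eqref{e:kreg:1} now gives $u_j(x)>u_i(x)$ while admissibility ($a>b$) gives $u_\ell(x)>u_j(x)$, so in every local neighbourhood the argmax consists only of cooperators. The main bookkeeping obstacle, which distinguishes the argument from the $K_n$ case, is that a neighbour $j$ of $i$ may have further neighbours lying outside $N_{\leq 1}(i)$; those extra vertices contribute the utility value $kd$ (resp.\ $ka$) to the corresponding local neighbourhood, and one must check that this does not disrupt the monochromatic character of the local argmax. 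The admissibility inequalities $c>d$ (resp.\ $a>b$) handle exactly this issue, so no hypothesis beyond \eqref{e:kreg:0}--\eqref{e:kreg:1} is needed.
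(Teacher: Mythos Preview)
Your proposal is correct and follows precisely the approach the paper intends: it explicitly says the ideas from the proofs of Theorems~\ref{t:Kn:0} and~\ref{t:Kn:1} carry over, and your argument does exactly that, with the added (and necessary) bookkeeping that on a general $k$-regular graph a neighbour $j$ of the minority vertex $i$ may itself have neighbours outside $N_{\leq 1}(i)$, whose utilities $kd$ (resp.\ $ka$) must be compared as well. Your observation that the admissibility inequalities $c>d$ and $a>b$ dispose of this extra case is exactly the missing detail the paper leaves implicit.
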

However, the following example shows that \eqref{e:kreg:0} and \eqref{e:kreg:1} are only sufficient and not necessary for the attractivity of $\lbrace (0,0,\ldots,0)\rbrace$ and $\lbrace (1,1,\ldots,1)\rbrace$ for evolutionary games on $k$-regular graphs.

\begin{exmp}\label{x:Cayley}
Consider the undirected Cayley graph (see \cite[p.\ 34]{aGodsil}) of the Dihedral group of order 24 (as a permutation group on $\{1,\dots,12\}$,
see \cite[p.\ 46]{aHungerford} for the notation)
with generators
\begin{align*}
  g_1&=(2\; 12)(3\; 11)(4\; 10)(5\; 9)(6\; 8), \\
  g_2&=(1\; 2)(3\; 12)(4\; 11)(5\; 10)(6\; 9)(7\; 8), \\
  g_3&=(1\; 4)(2\; 3)(5\; 12)(6\; 11)(7\; 10)(8\; 9).
\end{align*}
The graph is depicted in Figure \ref{f:Cayley}.
\begin{figure}
\begin{center}
\begin{minipage}{14cm}
\begin{flushleft}
  \includegraphics[scale=0.8]{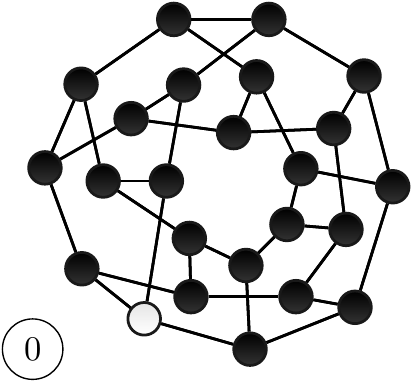}
  \includegraphics[scale=0.8]{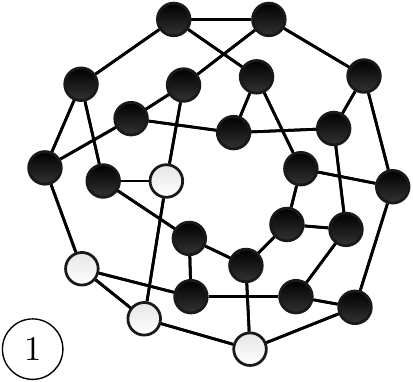}
  \includegraphics[scale=0.8]{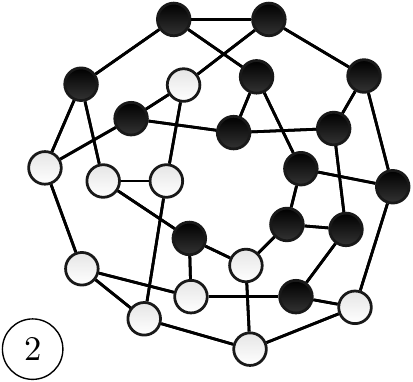}
  \includegraphics[scale=0.8]{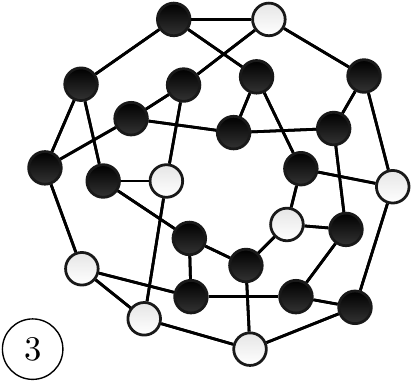}
  \includegraphics[scale=0.8]{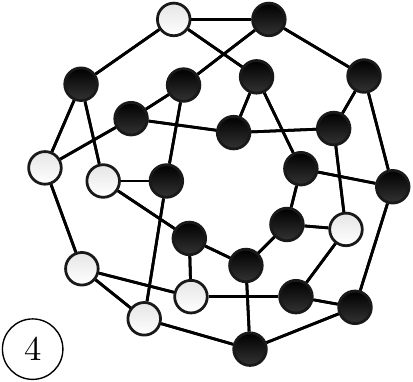}
  \includegraphics[scale=0.8]{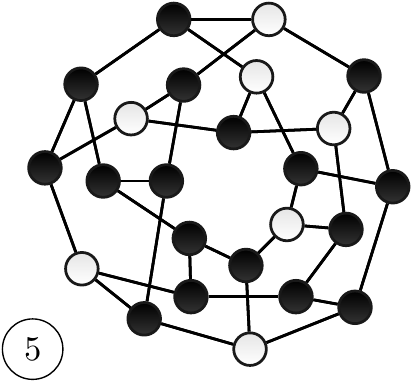}
  \includegraphics[scale=0.8]{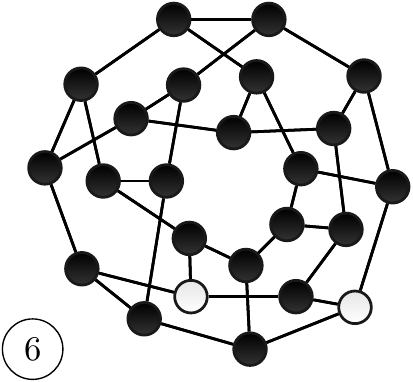}
  \includegraphics[scale=0.8]{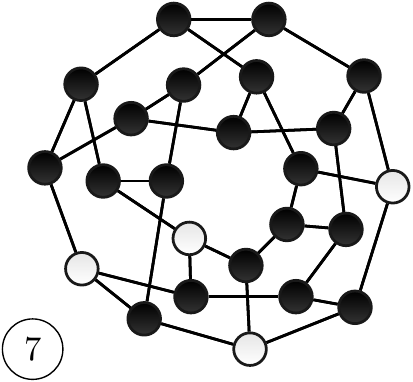}
  \includegraphics[scale=0.8]{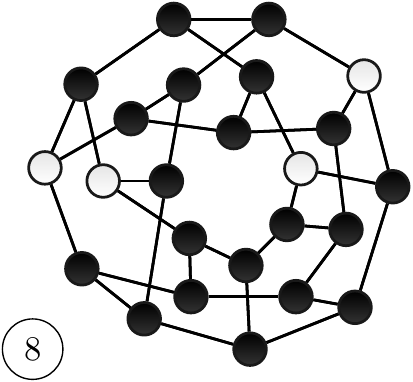}
  \includegraphics[scale=0.8]{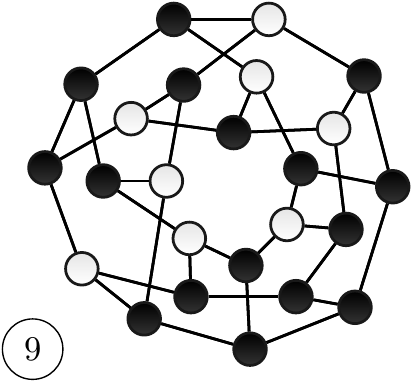}
  \includegraphics[scale=0.8]{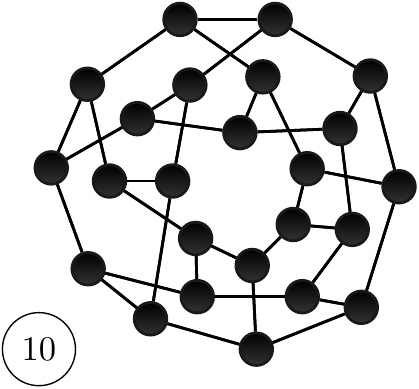}
\end{flushleft}
\end{minipage}

  \caption{A trajectory of the evolutionary game from Example \ref{x:Cayley}. For possible reproduction we provide its graph6 notation (see \cite{wMcKay}):
}\label{f:Cayley}
\verb|WsOPA?OG?[?E@C?o@??@??O?????????s??k?@@_?Cg??KO|.
\end{center}
\end{figure}
Consider the evolutionary game on $G$ with utility function (\ref{e:utility}) and parameters $(a,b,c,d)=(1,0.88,1.74,0)$. Then the
following inequalities are fulfilled:
\begin{align*}
 &0\cdot c+3 \cdot d = 0 &<\quad& 1\cdot c+2\cdot d = 1.74 &<\quad& 0\cdot a+3\cdot b =2.64\\
 < \quad&1\cdot a+2\cdot b =2.76 &<\quad& 2\cdot a+1\cdot b = 2.88 &<\quad& 3\cdot a+0\cdot b = 3\\
 < \quad&2\cdot c+1\cdot d =3.48 &<\quad& 3\cdot c+0\cdot d = 5.22
\end{align*}
Since $G$ is $3$-regular, all other parameters that satisfy the same inequalities will lead to the same
evolutionary game.

The initial state with exactly one cooperator, whose position does not matter because the graph is vertex-transitive,
reaches $(0,0,\dots,0)$ after $10$ steps for these parameters (see Figure~\ref{f:Cayley}). Therefore $(0,0,\ldots,0)$ is attractive, although \eqref{e:kreg:0} is violated.
\end{exmp}

%
\section{General Asynchronous Update - Nonautonomous Evolutionary Games}\label{s:nonautonomous}
In an evolutionary game all nodes are updated in a synchronous way (i.e.\ all nodes at each time step), cf.\ Definition \ref{d:evgame}. To formulate asynchronous update which is only updating a certain subset of nodes at each time step, we introduce a notion of update order.
\begin{defn}\label{d:update:order}
A set-valued function $\upor:\Nat_0 \rightarrow 2^V$ is called \emph{update order}.

The update order $\upor$ is called
\begin{itemize}
\item \emph{non-omitting}, if for each vertex $v\in V$ and each $t_0\in\Nat_0$, there exists $t>t_0$ such that $v\in\upor(t)$; otherwise $\upor$ is called \emph{omitting},
\item \emph{periodic}, if there exists $T\in \Nat$ such that $\upor(t+T)=\upor(t)$ for each $t\in\Nat_0$,
\item \emph{synchronous}, if $\upor(t)=V$ for each $t\in\Nat_0$; otherwise $\upor$ is called \emph{asynchronous},
\item \emph{sequential}, if vertices can be ordered so that $\upor(t) = \lbrace (t+1)(\mathrm{mod}\, n)\rbrace$.
\end{itemize}
\end{defn}

\begin{rem}
Obviously, synchronous and sequential update orders are automatically non-omitting and periodic. A periodic update order could be omitting and a non-omitting update order is not necessarily periodic.
\end{rem}

\begin{exmp}\label{x:Hella}
Let $G$ be a finite graph with $V=\lbrace 1,2,\ldots, n\rbrace$, for some $n\in \Nat$, $n\geq 3$. Then $\upor:\Nat_0 \rightarrow 2^V$ given by
\begin{equation}\label{e:Hella}
\upor(t) = \begin{cases}
\lbrace 1,2 \rbrace				& \textrm{if } t \textrm{ is even},	\\
\lbrace 3,4,\ldots, n \rbrace	& \textrm{if } t \textrm{ is odd},
\end{cases}
\end{equation}
is a periodic and non-omitting update order.
\end{exmp}

Let us define
\[
\left(\Nat_0\right)^2_\geq := \brc{ (t,s) \in \Nat_0 \times \Nat_0: t\geq s}.
\]
\begin{defn}\label{d:nads}
A \emph{nonautonomous dynamical system} (or \emph{two-parameter process}, or \emph{two-parameter semiflow}) $\varphi$ is a map
\[
\varphi: \left(\Nat_0\right)^2_\geq \times M \rightarrow M,
\]
which satisfies the \emph{two-parameter semiflow property}
\[
\varphi(t,t,x)=x,\quad \varphi(t,r,\varphi(r,s,x))=\varphi(t,s,x),
\]
for all $x \in M$ and $t,r,s\in\Nat_0$ such that $t\geq r\geq s$.
\end{defn}

\begin{defn}\label{d:nagame}
A \emph{nonautonomous evolutionary game} on $G=(V,E)$ with a utility function $u$ and update order $\upor:\Nat_0 \rightarrow 2^V$ is the nonautonomous dynamical system $\varphi: \left(\Nat_0\right)^2_\geq \times S^V \rightarrow S^V$ with $\varphi_i:=\mathrm{proj}_i \circ \varphi : \left(\Nat_0\right)^2_\geq \times S^V\rightarrow S$ defined by
\[
\varphi_i(t+1,t,x) = \begin{cases}
x_{\max} & \textrm{if } i\in\upor(t), \left|A_i(x)\right|=1 \textrm{ and } A_i(x)=\{ x_{\max} \},\\
x_i & \mathrm{otherwise},
\end{cases}
\]
where $A_i(x)$ is defined in \eqref{e:setAi}.
\end{defn}

\begin{rem}
Definition \ref{d:nagame} is a nonautonomous version of an evolutionary game with imitation dynamics (cf.\ Remark \ref{r:evgame}). As in the case of (autonomous) evolutionary games, we could consider alternative definitions of  deterministic nonautonomous evolutionary games.
\end{rem}

\begin{defn}\label{d:na:attractor}
Let $\varphi: \left(\Nat_0\right)^2_\geq \times S^V \rightarrow S^V$ be a nonautonomous evolutionary game on $G=(V,E)$. A set $A\subset \Nat_0 \times S^V$ is called \emph{attractor} of $\varphi$ if
\begin{enumerate}[(a)]
\item $A$ is \emph{invariant}, i.e.\ for all $(t,t_0)\in \left(\Nat_0\right)^2_\geq$:
\[
\varphi(t,t_0,A(t_0)) = A(t)
\]
where $A(t) := \{x \in S^V : (t,x) \in A\}$.
\item $A$ is \emph{attracting}, i.e.\ for any $(t_0,x)\in \Nat_0\times S^V$ with $\mathrm{dist}(x,A(t_0))\leq 1$ we have
\[
\lim\limits_{t\rightarrow\infty} \mathrm{dist}\left( \varphi(t,t_0,x),A(t) \right)=0.
\]
\end{enumerate}
\end{defn}

\begin{defn}\label{d:nabasin}
The \emph{basin of attraction} of an invariant set $A\subset \Nat_0\times S^V$ is the set
\[
B(A):= \big\{ (s,x)\in \Nat_0 \times S^V: \lim\limits_{t\rightarrow\infty} \mathrm{dist}\left( \varphi(t,s,x),A(t) \right)=0 \big\}.
\]
\end{defn}

We can make the following simple observation.

\begin{rem}\label{r:basin}
Let $\varphi$ be a nonautonomous evolutionary game and $A\subset \Nat_0\times S^V$ be an invariant set such that $A(t)=A(s)$ for all $t,s\in\Nat_0$. If we define
\[
D_1(A):=\big\{ (t,x) \in \Nat_0 \times S^V: \mathrm{dist}(x,A(t))=1 \big\},
\]
then $A$ is an attractor of a nonautonomous evolutionary game $\varphi: \left(\Nat_0\right)^2_\geq \times S^V \rightarrow S^V$ if and only if $D_1(A) \subset B(A)$.
\end{rem}

\begin{rem}\label{r:consistency}
A nonautonomous evolutionary game $\varphi: \left(\Nat_0\right)^2_\geq \times S^V \rightarrow S^V$ with synchronous update order $\upor(t)=V$ induces an associated (autonomous) evolutionary game $\psi: S^V \rightarrow S^V$ by setting
\begin{equation}\label{e:induced:psi}
	\psi(x) := \varphi(t+1,t,x) \textrm{ for all } x\in S^V,
\end{equation}
for an arbitrary $t\in \Nat_0$. $\psi$ in \eqref{e:induced:psi} is well-defined because synchronous update order implies that $\varphi(t+1,t,x)=\varphi(s+1,s,x)$ for all $t,s\in\Nat_0$ and $x\in S^V$.

An attractor $A \subset \Nat_0 \times S^V$ of $\varphi$ which is time-independent (i.e.\ $A(t)=A(s)$ for all $t,s\in\Nat_0$) induces an attractor $C:=A(t)$ of $\psi$, because $\lim_{t\rightarrow\infty} \mathrm{dist}\left( \varphi(t,t_0,x),A(t) \right)=0$, together with the fact that $\varphi(t,t_0,x)=\psi^{t-t_0}(x)$, implies that
\[
\lim\limits_{t\rightarrow\infty} \mathrm{dist}( \psi^{t-t_0}(x),C )=0.
\]
In this situation the domain of attraction $B(A)$ and the set $D_1(A)$ from Remark \ref{r:basin} are also time-independent and we identify them with the sets $\{x \in S^V : (0,x) \in B(A)\}$ and $\{x \in S^V : (0,x) \in D_1(A)\}$.
In general, an attractor $A$ of a nonautonomous evolutionary game $\varphi$ can be time-dependent, e.g.\ if $A$ is a periodic orbit $A(t+p)=A(t)$ for all $t\in\Nat_0$ and some natural number $p\geq 2$, which is attractive.
If $A\subset \Nat_0 \times S^V$ is a time-independent attractor then we say that $A(0) \subset S^V$ is an attractor. If, moreover, $A=\Nat_0 \times \{ x \}$ then we say that $x$ is an attractor.
\end{rem}


\section{Nonautonomous Evolutionary Games on $K_n$}\label{s:Kn:na}
We follow the ideas from Section \ref{s:Kn} and study the attractivity of full cooperation $(1,1,\ldots,1)$ and full defection $(0,0,\ldots,0)$ on complete graphs for nonautonomous evolutionary games with the utility function \eqref{e:utility}.

\begin{thm}\label{t:Kn:0:na}
Let $\varphi$ be a nonautonomous evolutionary game on $K_n$ with the aggregate utility function \eqref{e:utility}, $(a,b,c,d)\in \mathcal{P}$ and $\upor:\Nat_0 \rightarrow 2^V$ be a non-omitting update order. Then,
\begin{enumerate}[\upshape (i)]
\item if \eqref{e:Kn:0} holds, then $(0,0,\ldots,0)$ is an attractor of $\varphi$,
\item if \eqref{e:Kn:1} holds, then $(1,1,\ldots,1)$ is an attractor of $\varphi$.
\end{enumerate}

Moreover, if $\upor$ is sequential, then the reverse implications also hold.
\end{thm}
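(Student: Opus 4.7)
My plan is to reuse the utility identities from the proofs of Theorems \ref{t:Kn:0} and \ref{t:Kn:1}, observing that the value of $A_i(x)$ depends only on the current state $x$ and the graph $K_n$, not on the update order. What changes in the nonautonomous setting is merely which components of $\varphi(t+1,t,x)$ are allowed to move, so the autonomous proofs will transfer and the nonautonomous arguments will reduce to bookkeeping about when the unique cooperator (resp.\ defector) is touched.

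For the forward direction of (i) I will fix $(t_0,x^0)$ with $\mathrm{dist}(x^0,(0,\ldots,0))\leq 1$ and assume \eqref{e:Kn:0}. If $x^0=(0,\ldots,0)$ all utilities coincide and the state is immediately fixed. Otherwise there is a unique cooperator at some $i\in V$, and the computation from the proof of Theorem \ref{t:Kn:0} gives $u_j(x^0)>u_i(x^0)$ for every $j\neq i$. Since every vertex of $K_n$ sees all of $V$, the argmax set is common to all vertices and I obtain $A_k(x^0)=\{0\}$ for every $k\in V$. As long as the state is still $x^0$, any update of a vertex $k\neq i$ leaves it unchanged, while any update of $i$ drops the state to $(0,\ldots,0)$. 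Non-omittingness applied to $i$ and time $t_0$ then yields some $t^*\geq t_0$ with $i\in \upor(t^*)$, at which step the trajectory reaches $(0,\ldots,0)$ and remains there. Part (ii) will be handled symmetrically with Theorem \ref{t:Kn:1} and a unique defector.

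For the reverse direction under sequential update I will argue by contraposition. Assume $\{(0,\ldots,0)\}$ is an attractor but \eqref{e:Kn:0} fails, i.e.\ $b>d$ and $n\geq 1+\frac{c-d}{b-d}$. Start from $x^0$ with a single cooperator. Attractivity on the finite discrete state space forces the trajectory $x^t:=\varphi(t,0,x^0)$ to hit $(0,\ldots,0)$ at some minimal time $T\geq 1$. Since sequential update changes at most one coordinate per step, $x^{T-1}$ must differ from $(0,\ldots,0)$ in exactly one entry, hence it has a single cooperator, say at vertex $j$. Re-running the utility computation yields $u_j(x^{T-1})=(n-1)b\geq c+(n-2)d=u_k(x^{T-1})$ for $k\neq j$, so $A_j(x^{T-1})=\{1\}$ when the inequality is strict and $|A_j(x^{T-1})|=2$ when equality holds. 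In either case Definition \ref{d:nagame} forces $\varphi_j(T,T-1,x^{T-1})=x^{T-1}_j=1$, contradicting $x^T_j=0$. The converse for (ii) is analogous.

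The main obstacle I anticipate is the temptation to follow the trajectory forward from a single cooperator: the number of cooperators can oscillate, direct monotonicity is unavailable, and for non-sequential non-omitting orders the reverse implication actually fails (as Example~\ref{x:Hella:2} will show). The trick that keeps the sequential reverse direction short is to run time backwards one step from $(0,\ldots,0)$: sequentiality forces the penultimate state to have exactly one cooperator, and then the very utility inequality that drove the forward direction obstructs the required final transition.
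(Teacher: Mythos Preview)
Your proposal is correct and follows the same route as the paper: compute utilities at a single-cooperator (resp.\ defector) state, use non-omittingness to guarantee the outlier is eventually updated, and for the sequential converse run time back one step from $(0,\ldots,0)$ to a penultimate state at distance one that cannot make the required transition. Your reverse direction is in fact more explicit than the paper's write-up, which leaves the step that sequentiality forces $d(x^{T-1},(0,\ldots,0))=1$ implicit.
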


\begin{proof}
The proof is very similar to the ideas from the proofs of Theorems \ref{t:Kn:0} and \ref{t:Kn:1}. To prove statement (i), let us assume that \eqref{e:Kn:0} holds and $x\in S^V=\lbrace{0,1\rbrace}^V$ is such that $d \left( x, (0,0,\ldots,0)\right) = 1$. Then there exists a unique $i\in V$ such that $x_i=1$ and $x_j=0$ for all $j\neq i$. The utilities satisfy $u_j(x)>u_i(x)$ for all $j\neq i$. Hence, $\varphi(t+1,t,x)=x$ if $i\notin \upor(t)$ and $\varphi(t+1,t,x)=(0,0,\ldots,0)$ if $i\in \upor(t)$. Since the update rule is non-omitting, we know that there exists $t\in\Nat_0$ such that $i\in\upor(t)$ and consequently for each $s\in\Nat$ we have
\[
\varphi(t+s,0,x)=(0,0,\ldots,0).
\]
The latter statement (ii) is proven in the same way.

Finally, we would like to show that if the update order is sequential, then the conditions \eqref{e:Kn:0} and \eqref{e:Kn:1} are also necessary. Suppose by contradiction that \eqref{e:Kn:0} does not hold. Then for each $x\in S^V=\lbrace{0,1\rbrace}^V$ with $d \left( x, (0,0,\ldots,0)\right) = 1$ we have that for each $t\in\Nat_0$
\begin{equation}\label{e:Kn:0:na:proof}
\varphi(t+1,t,x) \neq (0,0,\ldots,0).
\end{equation}
If $(0,0,\ldots,0)$ is an attractor then there exists $t\in\Nat$ such that
\[
\varphi(t,0,x)\neq(0,0,\ldots,0)\quad \mathrm{and} \quad \varphi(t+1,0,x)=(0,0,\ldots,0),
\]
which implies that
\[
\varphi(t+1,t,x)=(0,0,\ldots,0),
\]
a contradiction to \eqref{e:Kn:0:na:proof}.
\end{proof}

Obviously, if the update order is not non-omitting, the single cooperator need not have a chance to switch and therefore there are no conditions which ensure attractivity of $(0,0,\ldots,0)$ or $(1,1,\ldots,1)$. In the following example we show that for general non-omitting update orders, we cannot reverse the implications, since either \eqref{e:Kn:0} or \eqref{e:Kn:1} need not be necessary.

\begin{exmp}\label{x:Hella:2}
Let us consider a nonautonomous evolutionary game on $K_n$ with utility function \eqref{e:utility} and with non-omitting and periodic update order given by \eqref{e:Hella} in Example \ref{x:Hella}. Let us assume that $(a,b,c,d)\in \mathcal{P}$ are such that \eqref{e:Kn:0} is not satisfied, i.e.
\begin{equation}\label{e:Hella:1c}
(n-1)b>c+(n-2)d,
\end{equation}
but
\begin{align}
a+(n-2)b &<2c+(n-3)d, \label{e:Hella:2c} \\
2a+(n-3)b &< 3c+(n-4)d. \label{e:Hella:3c}
\end{align}

Condition \eqref{e:Hella:1c} implies that one cooperator has a higher utility than $(n-1)$ defectors, inequalities \eqref{e:Hella:2c} and \eqref{e:Hella:3c} ensure that two (or three) cooperators have lower utility than $(n-2)$ (or $(n-3)$) defectors.

If  $x\in S^V=\lbrace{0,1\rbrace}^V$ is such that $d \left( x, (0,0,\ldots,0)\right) = 1$ (i.e.\ there exists a unique $i\in V$ such that $x_i=1$), then we have two possibilities
\begin{itemize}
\item $i\in\lbrace 1,2 \rbrace$. Since $i\in \upor(0)$, we have that
\begin{align*}
\varphi(1,0,x) & \overset{\eqref{e:Hella:1c}}{=} (1,1,0,0,\ldots,0), \\
\varphi(2,0,x) &\overset{\eqref{e:Hella:2c}}{=} (1,1,0,0,\ldots,0), \\
\varphi(3,0,x) &\overset{\eqref{e:Hella:2c}}{=} (0,0,0,0,\ldots,0).
\end{align*}
\item $i\in\lbrace 3,4,\ldots,n \rbrace$. Then without loss of generality, we can assume that $i=3$. Hence,
\begin{align*}
\varphi(1,0,x) & \overset{\eqref{e:Hella:1c}}{=} (1,1,1,0,\ldots,0), \\
\varphi(2,0,x) &\overset{\eqref{e:Hella:3c}}{=} (1,1,0,0,\ldots,0), \\
\varphi(3,0,x) &\overset{\eqref{e:Hella:2c}}{=} (0,0,0,0,\ldots,0).
\end{align*}
\end{itemize}
Consequently, we have shown that $(0,0,\ldots,0)$ is an attractor although \eqref{e:Kn:0} is not satisfied.
\end{exmp}

\section{Existence of Attractors and Update Orders}\label{s:update:order}
In the previous sections we have seen that \eqref{e:Kn:0} and \eqref{e:Kn:1} are necessary and sufficient for the existence of the attractors $(0,0,\ldots,0)$ and $(1,1,\ldots,1)$ on $K_n$ if the synchronous or sequential update order is considered. In this section we focus on the difference between synchronous and sequential update orders in situations in which those inequalities are not satisfied. We show that the behaviour is no longer identical and that sequential updating offers a more diverse behaviour.

First we study the synchronous update order.
\begin{thm}\label{t:hd:a}
Let $(a,b,c,d)\in \mathcal{P}$. Let us consider the (autonomous) evolutionary game $\varphi$ on $K_n$ with the utility function \eqref{e:utility}. Then there exists a nontrivial attractor of $\varphi$ if and only if either \eqref{e:Kn:0} or \eqref{e:Kn:1} hold.
\end{thm}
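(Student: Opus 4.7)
The $\Leftarrow$ direction is immediate from the preceding theorems: if \eqref{e:Kn:0} holds, Theorem \ref{t:Kn:0} furnishes the nontrivial attractor $\{(0,\ldots,0)\}$, and \eqref{e:Kn:1} similarly yields $\{(1,\ldots,1)\}$ via Theorem \ref{t:Kn:1}. The substance of the theorem therefore lies in the converse, and my whole plan is devoted to it.

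Assume that neither \eqref{e:Kn:0} nor \eqref{e:Kn:1} holds. The key observation I would exploit is that on $K_n$ every vertex is adjacent to every other vertex, so the utility of a vertex depends only on its own strategy and on the total number $k=|\{i:x_i=1\}|$ of cooperators. Setting
\[
u^{C}(k) := (k-1)a + (n-k)b, \qquad u^{D}(k) := kc + (n-k-1)d, \qquad p(k) := u^{C}(k) - u^{D}(k),
\]
one sees that $p$ is affine in $k$, and the computations already carried out in the proofs of Theorems \ref{t:Kn:0} and \ref{t:Kn:1} show that (under (A1)--(A5)) \eqref{e:Kn:0} is equivalent to $p(1)<0$ and \eqref{e:Kn:1} to $p(n-1)>0$. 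The standing hypothesis therefore becomes $p(1)\geq 0$ and $p(n-1)\leq 0$.

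From the imitation rule \eqref{e:imitation} it follows that for every configuration $x$ with $0<k<n$ cooperators, $\varphi(x)$ equals $(1,\ldots,1)$ if $p(k)>0$, $(0,\ldots,0)$ if $p(k)<0$, and $x$ itself if $p(k)=0$. Consequently every orbit reaches a fixed point after exactly one step, and every invariant set is a union of fixed points. The fixed points are $(0,\ldots,0)$, $(1,\ldots,1)$, and --- whenever $p$ has an integer root $k^{*}\in(0,n)$ --- all configurations with exactly $k^{*}$ cooperators.

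With this structural reduction in hand, I would check each candidate nontrivial invariant set against the attractor condition by inspecting its distance-$1$ perturbations. The singleton $\{(0,\ldots,0)\}$ is excluded because $p(1)\geq 0$ sends single-cooperator states away from it (either to $(1,\ldots,1)$ or to themselves), and $\{(1,\ldots,1)\}$ is ruled out symmetrically by $p(n-1)\leq 0$. A larger candidate must contain additional fixed points, and the affineness of $p$ (which forces $p$ to have at most one integer zero unless $p\equiv 0$) tightly controls how such a set can be enlarged; chasing the attractor condition on the neighbouring $k$-values forces the would-be attractor either to coincide with $S^V$ or to fail attraction at some distance-$1$ perturbation. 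The step I expect to require the most care is this final bookkeeping, specifically the intermediate candidate $\{(0,\ldots,0),(1,\ldots,1)\}$ together with the boundary cases $p(1)=0$ or $p(n-1)=0$, where the interplay of fixed points and one-step images is most delicate.
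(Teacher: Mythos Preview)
Your overall strategy coincides with the paper's: reduce to the affine utility gap $p(k)=u^{C}(k)-u^{D}(k)$, observe that every orbit lands on a fixed point in one step, and then test the candidate invariant sets against the attractor condition. The paper's proof in fact only tests the three sets $\{(0,\dots,0)\}$, $\{(1,\dots,1)\}$ and $M^{*}=\{x:\sum_i x_i=m^{*}\}$ and concludes from their non-attractivity that only the trivial attractor remains. You are more careful than the paper in that you explicitly flag the union $\{(0,\dots,0),(1,\dots,1)\}$ as a case requiring separate treatment.

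Unfortunately, that is exactly the step that cannot be completed, so your final ``bookkeeping'' will not close. In the generic subcase $p(1)>0$ and $p(n-1)<0$ of the standing hypothesis, the set $A=\{(0,\dots,0),(1,\dots,1)\}$ \emph{is} a nontrivial attractor in the sense of Definition~\ref{d:attractor}: it is invariant, and every $x$ with $\mathrm{dist}(x,A)=1$ has either one cooperator (hence $\varphi(x)=(1,\dots,1)\in A$ since $p(1)>0$) or $n-1$ cooperators (hence $\varphi(x)=(0,\dots,0)\in A$ since $p(n-1)<0$). For a concrete witness take $n=5$ and $(a,b,c,d)=(1,\tfrac12,\tfrac32,0)\in\mathcal{P}_{\mathrm{HD}}$, where $p(1)=\tfrac12>0$ and $p(4)=-\tfrac52<0$; neither \eqref{e:Kn:0} nor \eqref{e:Kn:1} holds, yet $A$ is a nontrivial attractor. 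Thus the converse implication, as stated, fails; the paper's argument overlooks this union, and your plan to ``chase the attractor condition'' for this candidate will not force it up to $S^V$. Any correct version of the result will need either an additional indecomposability/minimality requirement on attractors or a reformulation of the conclusion.
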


\begin{proof}
Obviously, if \eqref{e:Kn:0} or \eqref{e:Kn:1} hold, then either $(0,0,\ldots,0)$ or $(1,1,\ldots,1)$ are attractors, see Theorems \ref{t:Kn:0} and \ref{t:Kn:1}.

Now, let us assume that neither \eqref{e:Kn:0} nor \eqref{e:Kn:1} hold. Let $x\in S^V$ and $m=\sum_{i=1}^n x_i$. If we denote by $v_1(m)$ ($v_0(m)$) the utilities of vertices with $x_i=1$ ($x_i=0$), we can easily derive that
\begin{align}
v_1(m)-v_0(m) &= (m-1)a + (n-m) b - mc - (n-m-1)d \notag\\
		     &= -m ((c-a)+(b-d))+n(b-d)-(a-d).\label{e:dif:ut}
\end{align}

The simultaneous violation of \eqref{e:Kn:0} and \eqref{e:Kn:1} implies that $v_1(1)-v_0(1)\geq 0$ and $v_1(n-1)-v_0(n-1)\leq 0$. Then \eqref{e:dif:ut} implies that $(c-a)+(b-d)>0$ and that there exists $m^*\in \left[ 1,n-1\right]$ defined by
\begin{equation}\label{e:m*}
m^*:=\frac{n(b-d)-(a-d)}{(c-a)+(b-d)},
\end{equation}
such that $v_1(m^*)-v_0(m^*)=0$. Note that $\lim_{n\rightarrow \infty} \frac{m^*}{n} = \frac{d-b}{(a-c)+(d-b)}$, i.e.\ as $n$ tends to infinity $\frac{m^*}{n}$ tends to the interior fixed point of the replicator dynamics of HD and SH games (see Section 3).

We consider three invariant sets $\lbrace (0,0,\ldots,0) \rbrace$ and  $\lbrace(1,1,\ldots,1)\rbrace$ and $M^*$, which is given by
\[
M^*:= \Big\{ x\in S^V:\sum\limits_{i=1}^n x_i=m^* \Big\},
\]
and is nonempty if and only if $m^*\in \Nat$.  The sign of $v_1(m)-v_0(m)$ determines that
\begin{itemize}
	\item $\varphi_i(x)=1$ if $m<m^*$,
	\item $\varphi_i(x)=0$ if $m>m^*$,
	\item $\varphi_i(x)=x_i$ if $m=m^*$.
\end{itemize}

Therefore, the basins of attraction of these three invariant sets are given by
\begin{align*}
B \left(\left\lbrace(0,0,\ldots,0)\right\rbrace \right) &= \left\lbrace (0,0,\ldots,0) \right\rbrace \cup \Big\{ x\in S^V: m^*<\sum\limits_{i=1}^n x_i<n \Big\},\\
B \left(\left\lbrace(1,1,\ldots,1)\right\rbrace \right) &= \left\lbrace (1,1,\ldots,1) \right\rbrace \cup \Big\{ x\in S^V: 0<\sum\limits_{i=1}^n x_i<m^* \Big\},\\
B \left(M^* \right) &= M^*.
\end{align*}
Consequently, none of these invariant sets is an attractor (see Remark \ref{r:basin}) and there is only the trivial attractor $S^V$.
\end{proof}

The fact that $M^*$ is  not attractive may look surprising. The proof shows that this is caused by the synchronous update order which implies that all vertices update to full cooperation or full defection. Next we provide a characterization of the existence of an attractor for the sequential update order as well.

\begin{thm}\label{t:hd:na}
Let $(a,b,c,d)\in \mathcal{P}$. Let us consider a nonautonomous evolutionary game $\varphi$ on $K_n$ with the utility function \eqref{e:utility} and sequential update order. Then there exists a nontrivial attractor if and only if $(a,b,c,d)$ satisfy either

\begin{enumerate}[\upshape (a)]
\item inequality \eqref{e:Kn:0}, or
\item inequality \eqref{e:Kn:1}, or
\item $m^* \in [2,n-2]$ and $c-a+b-d>0$.
\end{enumerate}

\end{thm}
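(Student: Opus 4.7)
My plan is to prove the characterization by establishing the two implications separately, in both cases controlling the dynamics via the sign of the affine function
\[
f(m) := v_1(m)-v_0(m) = -m\bigl[(c-a)+(b-d)\bigr] + n(b-d) - (a-d)
\]
that already appeared in the proof of Theorem~\ref{t:hd:a}. Write $M_k := \{x \in S^V : \sum_{i=1}^n x_i = k\}$ for the set of configurations with exactly $k$ cooperators, so that the $M^*$ of Theorem~\ref{t:hd:a} coincides with $M_{m^*}$ whenever $m^*\in\mathbb{N}$. On $K_n$ every vertex sees every other vertex, so $f(m)$ alone decides the effect of a single sequential update: if $f(m)>0$ the updated vertex becomes (or stays) a cooperator, if $f(m)<0$ it becomes a defector, and if $f(m)=0$ then $|A_i(x)|=2$ and the vertex is pinned.

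For sufficiency, cases (a) and (b) follow at once from Theorem~\ref{t:Kn:0:na}(i)--(ii), giving the attractors $(0,\dots,0)$ and $(1,\dots,1)$. For case (c), I would split on the integrality of $m^*$. If $m^*\in\{2,\dots,n-2\}$ is an integer, then $f(m^*)=0$ makes every configuration in $M^*$ a fixed point, so $M^*$ is invariant; its distance-$1$ neighbours lie in $M_{m^*-1}\cup M_{m^*+1}$ where $f$ is strictly signed, so sequential updating moves $m$ one step toward $m^*$ whenever a defector (respectively cooperator) is selected, and $M^*$ is reached within at most $n$ steps. If $m^*\notin\mathbb{N}$ and $k := \lfloor m^*\rfloor\in\{2,\dots,n-3\}$, then at $m=k$ defectors flip to cooperators while cooperators stay, and at $m=k+1$ the converse occurs; a counting argument using Pascal's identity $\binom{n-1}{k-1}+\binom{n-1}{k}=\binom{n}{k}$ shows that the one-step map $\varphi(t+1,t,\cdot)$ is a bijection of $M_k\cup M_{k+1}$ onto itself, so this set is invariant, and the same sign analysis drives its distance-$1$ neighbours in $M_{k-1}\cup M_{k+2}$ (nonempty because $k\geq 2$ and $k+1\leq n-2$) into it within $n$ steps.

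For necessity, assume none of (a), (b), (c) holds. The failures of (a) and (b) translate to $f(1)\geq 0$ and $f(n-1)\leq 0$. If $(c-a)+(b-d)\leq 0$, then $f$ is affine with nonpositive slope, and combined with these two inequalities this forces $f\equiv 0$ on $\{1,\dots,n-1\}$; every configuration is then pointwise fixed, so any invariant $A\subset S^V$ that attracts its distance-$1$ neighbours must be closed under single-bit flips, and by connectivity of the Hamming graph this forces $A=S^V$. If instead $(c-a)+(b-d)>0$ but $m^*\notin[2,n-2]$, the symmetry obtained by swapping cooperators and defectors (which sends $m^*$ to $n-m^*$) reduces matters to the case $m^*\in[1,2)$; here I would rule out a proper attractor by a direct inspection of the few possible invariant sets, arguing that any candidate containing a periodic orbit inside $M_1\cup M_2$ has the fixed point $(0,\dots,0)$ as a distance-$1$ configuration whose orbit stays put and so fails to approach the candidate, while any candidate already containing $(0,\dots,0)$ necessarily contains an $M_1$ configuration whose single-bit perturbations in $M_0\cup M_2$ are driven out of the candidate by $f(1)\geq 0$ and $f(2)<0$. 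This boundary analysis in the transitional range $m^*\in[1,2)$ is the main obstacle, since the sequential update generates a richer family of candidate invariant sets than the synchronous one treated in Theorem~\ref{t:hd:a}, and the argument has to exclude each of them in turn.
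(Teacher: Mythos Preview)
Your sufficiency argument tracks the paper's: cases (a) and (b) via Theorem~\ref{t:Kn:0:na}, and case (c) by the same split on the integrality of $m^*$. The bijection argument using Pascal's identity is a genuine addition over the paper, which only argues that orbits eventually land in $M_{\lfloor m^*\rfloor}\cup M_{\lceil m^*\rceil}$ and does not verify the surjectivity $\varphi(t,t_0,A(t_0))=A(t)$ that the nonautonomous notion of invariance (Definition~\ref{d:na:attractor}(a)) demands; your bijection supplies exactly that.

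For the necessity, your instinct that the range $m^*\in[1,2)$ is ``the main obstacle'' is correct---but the obstacle cannot be overcome, because the claimed implication fails there. Take $n=5$ and $(a,b,c,d)=(2,1,3,0)\in\mathcal{P}_{\mathrm{HD}}$; then $m^*=\tfrac{3}{2}$, none of (a), (b), (c) hold, yet the time-independent set $A=\Nat_0\times(M_0\cup M_1\cup M_2)$ is a nontrivial attractor: your own Pascal-identity argument with $k=1$ shows each $\varphi(t{+}1,t,\cdot)$ is a bijection of $M_1\cup M_2$, and $(0,\dots,0)$ is fixed, so $A$ is invariant; its only distance-$1$ neighbours outside $A$ lie in $M_3$, where $f(3)<0$ drives orbits into $M_2$ within $n$ steps. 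Your proposed dichotomy---candidates inside $M_1\cup M_2$ fail to attract $(0,\dots,0)$, while candidates containing $(0,\dots,0)$ have perturbations ``driven out''---misses precisely this union, for which $M_0\cup M_2$ already lies inside the candidate. The paper's proof shares this gap: in its cases (ii) and (iv) it only checks that the single set $M_{\lfloor m^*\rfloor}\cup M_{\lceil m^*\rceil}$ fails to attract the adjacent fixed point, and never excludes the enlargement obtained by adjoining that fixed point. (Your explicit treatment of the degenerate possibility $(c-a)+(b-d)=0$, forcing $f\equiv 0$, is also an improvement over the paper, which folds this into its case (iii) and thereby asserts an attractor where every state is fixed and only the trivial attractor exists.)
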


\begin{proof}
Using the notation of the proof of Theorem \ref{t:hd:a} we observe that there exists $\alpha, \beta\in\Real$ such that
\begin{equation}\label{e:affinity}
v_1(m)-v_0(m) = \alpha m + \beta.
\end{equation}

Hence, we can distinguish between five cases. Either
\begin{enumerate}[\upshape (i)]
\item $v_1(1)-v_0(1)<0$, or
\item $v_1(1)-v_0(1) \geq 0$ and $v_1(2)-v_0(2)<0$, or
\item $v_1(2)-v_0(2) \geq 0$ and $v_1(n-2)-v_0(n-2) \leq 0$, or
\item $v_1(n-2)-v_0(n-2) > 0$ and $v_1(n-1)-v_0(n-1) \leq0$, or
\item $v_1(n-1)-v_0(n-1)>0$.
\end{enumerate}

First, we can observe that (i) is satisfied if and only if \eqref{e:Kn:0} holds. Then Theorem \ref{t:Kn:0:na} implies that $(0,0,\ldots,0)$ is attractive. Similarly, (v) is satisfied if and only if \eqref{e:Kn:1} holds and Theorem \ref{t:Kn:0:na} yields that $(1,1,\ldots,1)$ is attractive.

In the remaining three cases (ii)-(iv), the equalities \eqref{e:affinity} and \eqref{e:dif:ut} imply that $c-a+b-d>0$, i.e.\ that there exists $m^* \in [-1,n-1]$ given by \eqref{e:m*} which satisfies $v_1(m^*)-v_0(m^*)=0$.

Let us consider case (iii) first. The inequalities (iii) imply that $m^* \in [2,n-2]$ and $c-a+b-d>0$. As in the proof of Theorem \ref{t:hd:a} we see immediately that $v_1(m)>v_0(m)$ if and only if $0<m<m^*$ and that $v_0(m)>v_1(m)$ if and only if $m^*<m<n$. Moreover, sequential update order implies that  for all $t\in\Nat_0$ we have $\upor(t)=\lbrace i \rbrace$ for some $i\in V$. This implies that
\begin{itemize}
\item if $0<m<m^*$, $\varphi_i(t+1,t,x) = 1$,
\item if $m^*<m<n$, $\varphi_i(t+1,t,x) = 0$,
\item if $m=m^*$, $\varphi_i(t+1,t,x) = x_i$.
\end{itemize}
Consequently, we distinguish between two cases
\begin{enumerate}[\upshape (a)]
\item if $m^*\in \Nat$. First, we consider $x\in S^V$ such that $1 \leq \sum\limits_{i=1}^n x_i=m < m^*$. Without loss of generality we can assume that the vertices are numbered so that $x_1=\dots=x_m=0$. Define
\begin{equation}\label{e:pr:attr}
A := \Big\{ (t,x)\in \Nat_0 \times S^V: \sum\limits_{i=1}^n x_i= m^* \Big\} .
\end{equation}
Then
\[
\varphi_i(i,0,x)=1,
\]
and thus there exists $x^*\in A$ (i.e.\ $\sum_{i=1}^n x_i^*=m^*$) so that
\[
\varphi(t,0,x) = x^*, \textrm{ for all } t\geq m.
\]
Similarly, if $x$ is such that $m^* < \sum_{i=1}^n x_i=m \leq n-1$, then the first $(m- m^*)$ vertices $i$ with $x_i=1$ switch to $x_i=0$.
Consequently, the set $A$ is the attractor of $\varphi$.

\smallskip
\item if $m^*\notin \Nat$, one could repeat the argument to get that any initial condition reaches a state with $\sum_{i=1}^n x_i=\lfloor m^* \rfloor$. If we are at such a state $x$ in time $t$ and $\upor(t)=\lbrace i \rbrace$ we have that
\[
\varphi_i(t+1,t,x)=1,
\]
independently of $x_i$ at time $t$. This implies that either $x$ remains unchanged or a state with $\sum_{i=1}^n x_i=\lceil m^* \rceil$ is reached.

Similarly, if we are at a state $x$ with $\sum_{i=1}^n x_i=\lceil m^* \rceil$ we have $\varphi_i(t+1,t,x)=0$, and either $x$ remains unchanged or a state with $\sum_{i=1}^n x_i=\lfloor m^* \rfloor$ is reached.

Consequently, we observe that the set
\begin{equation}\label{e:pr:cycle}
A = \Big\{ (t,x)\in \Nat_0 \times S^V: \sum\limits_{i=1}^n x_i= \lfloor m^* \rfloor \textrm{ or } \sum\limits_{i=1}^n x_i= \lceil m^* \rceil \Big\}
\end{equation}
is an attractor. The fact that the dynamical system always switches from a state with $\sum_{i=1}^n x_i=\lfloor m^* \rfloor$ to a state with $\sum_{i=1}^n x_i=\lceil m^* \rceil$ and the finiteness of the graph implies that $A$ is a union of cycles.
\end{enumerate}

To finish the proof, we consider cases (ii) and (iv), i.e.\ the situation in which $m^*\in [1,2) \cup (n-2,n-1]$ and $c-a+b-d>0$. In this case, the sets given by \eqref{e:pr:attr} and \eqref{e:pr:cycle} are invariant by the same argument as above. However, they are not attractive, since $(0,0,\ldots,0) \notin B(A)$ if $m^*\in[1,2)$ and $(1,1,\ldots,1) \notin B(A)$ if $m^*\in(n-2,n-1]$.
\end{proof}

\begin{figure}
\begin{minipage}{0.5\textwidth}
\begin{center}
\includegraphics[width=\textwidth]{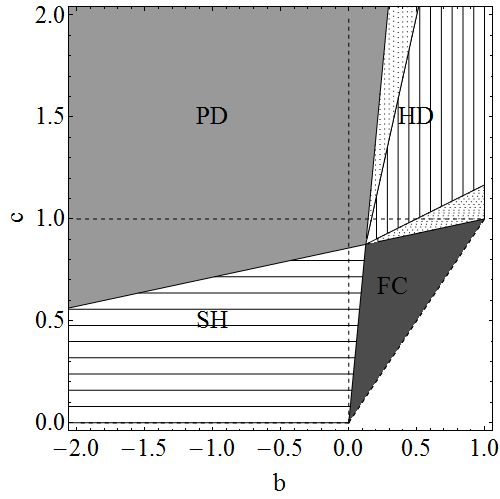}\\
\end{center}
\end{minipage}
\caption{Illustration of Theorem \ref{t:hd:na} with $a=1$ and $d=0$. Both $(0,0,\ldots,0)$ and $(1,1,\ldots,1)$ are attractive in the horizontally hatched region, $(0,0,\ldots,0)$ in the light gray region, $(1,1,\ldots,1)$ in the dark gray region. In the vertically hatched region, the attractive cycles \eqref{e:pr:cycle} and attractors \eqref{e:pr:attr} occur. There are no nontrivial attractors in the two dotted regions.}\label{f:attractors}
\end{figure}

\begin{rem}
To sum up, Theorems \ref{t:hd:a} and \ref{t:hd:na} provide a complete characterization of attractors of evolutionary games on $K_n$ with either synchronous or sequential update order. For synchronous update order, there are four possible outcomes:
\begin{itemize}
\item only $(0,0,\ldots,0)$ is attractive,
\item only $(1,1,\ldots,1)$ is attractive,
\item both $(0,0,\ldots,0)$ and $(1,1,\ldots,1)$ are attractive,
\item there is no nontrivial attractor.
\end{itemize}
These four regions correspond to those depicted in Figure \ref{f:att:reg}.

For sequential update order we have another two possibilities which consist of states in which both cooperators and defectors exist together:
\begin{itemize}
\item there is an attractive cycle given by \eqref{e:pr:cycle},
\item there is an attractive set of invariant states given by \eqref{e:pr:attr}.
\end{itemize}

Analyzing the above results, we see that the attracting cycle and the attracting set in which both cooperators and defectors exist together can occur if and only if $(a,b,c,d)\in  \mathcal{P}_{HD}\cup \mathcal{P}_{FC}$. $\mathcal{P}_{FC}$ is the only region in which all possible scenarios coexist, see Figure \ref{f:attractors}.

Again, note that Theorems \ref{t:hd:a} and \ref{t:hd:na} are consistent with the standard evolutionary game theory \cite{aHofbauer} as $n\To\infty$. Note that both the bistability region (both $(0,0,\ldots,0)$ and $(1,1,\ldots,1)$ are attractive) and the stable coexistence region in the sequential updating converge to $\mathcal{P}_{SH}$ and $\mathcal{P}_{HD}$, respectively. Note that, in finite populations, they are smaller but overreach to $\mathcal{P}_{FC}$ as well, see Figure \ref{f:attractors}.
\end{rem}

We provide a simple example to better illustrate the cycle of length $n(n+1)$ which has been constructed in the proof of Theorem \ref{t:hd:na}.
\begin{exmp}\label{x:cycle}
Let us assume that $(a,b,c,d)\in \mathcal{P}_{HD}\cup \mathcal{P}_{FC}$ and $m^*\in[2,n-2]$ is such that $m^*\notin \Nat$. Let us consider an evolutionary game on $K_n$ with utility function \eqref{e:utility} and sequential update order. We consider the initial condition:
\[
	x=(\overbrace{1,1,\ldots 1}^{\lfloor m^* \rfloor},\overbrace{0,0,\ldots,0}^{n-\lfloor m^* \rfloor}).
\]
Consequently, we derive that (bold numbers indicate the vertex which has just been updated)
\begin{align*}
	\varphi(1,0,x) &= (\mathbf{1},1,\ldots 1,0,0,\ldots,0),\\
	\varphi(2,0,x) &= (1,\mathbf{1},\ldots 1,0,0,\ldots,0),\\
	&\ldots  \\
	\varphi(\lfloor m^* \rfloor,0,x) &= (1,1,\ldots \mathbf{1},0,0,\ldots,0),\\
	\varphi(\lfloor m^* \rfloor+1,0,x) &= (1,1,\ldots 1,\mathbf{1},0,\ldots,0),\\
	\varphi(\lfloor m^* \rfloor+2,0,x) &= (1,1,\ldots 1,1,\mathbf{0},\ldots,0),\\
	&\ldots  \\
	\varphi(n,0,x) &= (1,1,\ldots 1,1,0,\ldots,\mathbf{0}),\\
	\varphi(n+1,0,x) &= (\mathbf{0},\underbrace{1,\ldots 1,1}_{\lfloor m^* \rfloor},0,\ldots,0).\\
\end{align*}
We repeat this argument to get that
\begin{align*}
	\varphi(2(n+1),0,x) &= (0,\mathbf{0},\underbrace{1,\ldots 1,1}_{\lfloor m^* \rfloor},0,\ldots,0),\\
	&\ldots  \\
	\varphi(p(n+1),0,x) &= (\underbrace{0,0,\ldots\mathbf{0}}_p,\underbrace{1,\ldots 1,1}_{\lfloor m^* \rfloor},0,\ldots,0),\\
	&\ldots  \\
	\varphi(n(n+1),0,x) &= (1,1,\ldots 1,0,0,\ldots,\mathbf{0}).
\end{align*}
\end{exmp}


\section{Irregular Graphs - Role of Utility Functions}\label{s:wheels}
In this section we study simple irregular graphs -- wheels $W_l$, $l\geq 4$, in which a central vertex is connected to all vertices of an $(l-1)$-cycle, see Figure \ref{f:wheels}. Our focus lies on identifying the importance of  different forms of utility functions, e.g.\ \eqref{e:utility}, \eqref{e:utility:M} or others.  Evolutionary games on regular graphs, which we have considered exclusively so far, are the same for aggregate $u_i^A$  and mean $u_i^M$ utility functions (see \eqref{e:utility}-\eqref{e:utility:M}), since $u_i^A$  is just a multiple of $u_i^M$ in this case. Straightforwardly, this is not longer true for irregular graphs.

\begin{figure}

\begin{minipage}{\textwidth}
\begin{center}
\begin{minipage}{0.35\textwidth}
\begin{center}
\includegraphics[width=\textwidth]{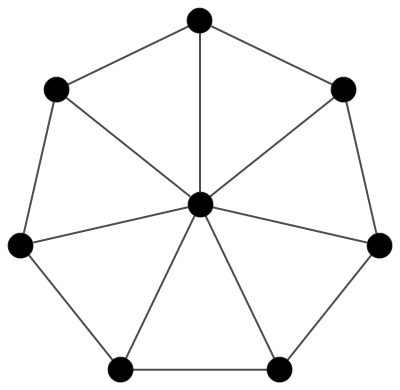}
\end{center}
\end{minipage}
\begin{minipage}{0.35\textwidth}
\begin{center}
\includegraphics[width=\textwidth]{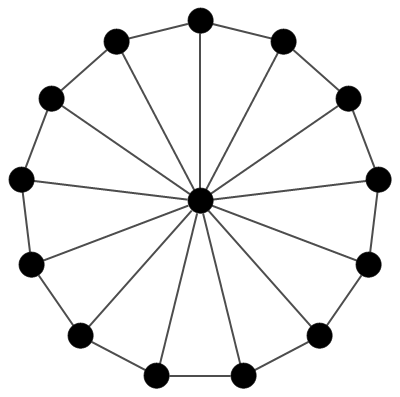}
\end{center}
\end{minipage}
\end{center}
\end{minipage}
\caption{Wheels $W_8$ and $W_{14}$.}\label{f:wheels}

\end{figure}

First, we study the attractivity of $(1,1,\ldots,1)$.

\begin{thm}
Let $(a,b,c,d)\in \mathcal{P}$. Let us consider an evolutionary game $\varphi$ on $W_l$. Then  $(1,1,\ldots,1)$ is an attractor if and only if
\begin{equation}\label{e:wheel}
\begin{cases}
c<\frac{2a+b}{l-1} & \textrm{ if } u_i^A \textrm{ is considered,}\\
c<\frac{2a+b}{3} & \textrm{ if } u_i^M \textrm{ is considered.}
\end{cases}
\end{equation}
\end{thm}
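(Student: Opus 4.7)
The plan is to split into two implications. For the ``if'' direction, every state at distance $1$ from $(1,1,\ldots,1)$ has exactly one defector, either the hub (Case A) or a cycle vertex (Case B); for each case I would compute the utilities of every vertex explicitly, apply the imitation rule \eqref{e:imitation}, and verify both that the defector switches to $C$ and that no cooperator flips. The two utility conventions differ only by a vertex-dependent normalization, so I will carry out the bookkeeping for $u^A$ and read off the $u^M$ threshold by dividing by the relevant degree.

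In Case A the defecting hub has aggregate utility $(l-1)c$ (mean $c$), while every cycle vertex is a cooperator facing one defector (the hub) and two cooperators, and therefore has aggregate $2a+b$ (mean $(2a+b)/3$); condition \eqref{e:wheel} is exactly the strict inequality needed for every cycle vertex to strictly dominate the hub, so the hub flips to $C$ in one step and every cycle vertex stays at $C$. In Case B, with $i^{*}$ the defecting cycle vertex, a direct computation gives the aggregate utilities $3c$ for $i^{*}$, $(l-2)a+b$ for the hub, $2a+b$ for the two cycle neighbours $j_{1},j_{2}$ of $i^{*}$, and (when $l\geq 5$) $3a$ for every other cycle cooperator. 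Using $l-1\geq 3$ and $a>b$, condition \eqref{e:wheel} yields the two algebraic consequences $3c<2a+b$ and $c<a$. The first inequality forces the argmax of $N_{\leq 1}(i^{*})$ to contain only cooperators, so $i^{*}$ flips. For each cooperator adjacent to $i^{*}$, it suffices to exhibit a cooperator in its $1$-neighbourhood whose utility strictly exceeds $u_{i^{*}}$: for $j_{1}$ and $j_{2}$ their own utility $2a+b>3c$ works; for the hub, the case $l=4$ uses its own utility $2a+b>3c$, and $l\geq 5$ uses an ``other'' cycle cooperator with aggregate utility $3a>3c$ (here one needs $c<a$). Cooperators non-adjacent to $i^{*}$ have $i^{*}\notin N_{\leq 1}$ and stay trivially.

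For the ``only if'' direction, consider the distance-$1$ state in which only the hub defects. If \eqref{e:wheel} fails with strict inequality, then $u^{A}_{\mathrm{hub}}>u^{A}_{j}$ strictly for every cycle vertex $j$, so each cycle vertex has $\{\mathrm{hub}\}$ as its unique argmax, flips to $D$, and the orbit reaches $(0,0,\ldots,0)$, from which $(1,1,\ldots,1)$ is unreachable; if equality holds, every argmax contains both strategies, every vertex retains its current state, and the resulting fixed point differs from $(1,1,\ldots,1)$. In either subcase Definition \ref{d:attractor} is violated.

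The main obstacle I anticipate is keeping Case B for the hub clean when $l\geq 5$: comparing $3c$ directly with the hub's own aggregate utility $(l-2)a+b$ would require an extra sign assumption on $b$, which is not provided by (A1)--(A5). The reduction $c<a$, extracted from \eqref{e:wheel} together with $a>b$, sidesteps this by producing a ``distant'' cycle cooperator whose utility beats $u_{i^{*}}$ without any further hypothesis.
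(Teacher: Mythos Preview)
Your proposal is correct and follows the same case-split structure as the paper (hub defector versus peripheral defector, explicit utility computation, then the ``only if'' via the hub-defector state). The only substantive difference is in Case~B for the aggregate utility. The paper proves in one stroke that $u_{i^{*}}^{A}=3c<(l-2)a+b=u_{1}^{A}$ via the chain
\[
3c \;<\; 3\,\frac{2a+b}{l-1}\;\overset{\text{(A5)}}{\leq}\;(l-2)a+b,
\]
and then, because the hub lies in the $1$-neighbourhood of \emph{every} vertex, concludes immediately that no argmax can contain the defector, so $\varphi(x)=(1,\dots,1)$. Your alternative---comparing $u_{i^{*}}$ only with peripheral cooperators and, for the hub when $l\geq 5$, with a distant cycle vertex of utility $3a$---is also valid, and it has the pleasant side effect that every comparison you make is between degree-$3$ vertices, so the very same inequalities ($3c<2a+b$ and $c<a$) settle Case~B for $u^{M}$ as well, justifying your ``read off'' remark.

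Your anticipated obstacle, however, is not real: the direct comparison $3c<(l-2)a+b$ needs no sign hypothesis on $b$. From (A5) one has $c>0$, hence $3c\leq(l-1)c<2a+b$, and then $a>0$ with $l\geq 4$ gives $2a+b\leq(l-2)a+b$. So the paper's shorter route was available to you.
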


\begin{proof}
Let us number the vertices, so that the central vertex $1$ is connected to peripheral vertices $\lbrace 2,3,\ldots,n \rbrace$.
\begin{enumerate}
\item Let us consider the aggregate utility $u_i^A$ first and suppose again that the state $x\in S^V$ is such that $\sum_{i=1}^n x_i=n-1$.
	\begin{enumerate}
		\item If the central vertex is the single defector, then
			\begin{eqnarray*}
				u_1^A&=&(l-1) c,\\
				u_i^A&=&2a+b, \quad i=2,3,\ldots,n.
			\end{eqnarray*}
			Obviously, the first inequality in \eqref{e:wheel} is equivalent to $u_1^A<u_i^A$.
		\item If the  single defector is a peripheral vertex, we can, without loss of generality, assume that $x=(1,0,1,\ldots, 1)$, i.e.\ vertex $2$ is the single defector. Then, we have that
			\begin{eqnarray*}
				u_1^A&=& (l-2)a+b,\\
				u_2^A&=& 3c,\\
				u_i^A&=& 2a+b, \quad i\in N_1(2) \cap \lbrace 2,3,\ldots,n \rbrace,\\
				u_i^A&=& 3a, \quad i\notin N_1(2).
			\end{eqnarray*}
			Then we can bound $u_2^A$ by $u_1^A$ from above
			\[
				u_2^A=3c \overset{\eqref{e:wheel}}{<} 3\frac{2a+b}{l-1} \overset{\textrm{(A5)}}{\leq} 3\frac{(l-2)a+b}{3} = u_1^A.
			\]
			Consequently $\varphi(x)=(1,1,\ldots,1)$.
	\end{enumerate}
\item If the mean utility $u_i^M$ is considered instead then
\begin{enumerate}
		\item If the central vertex is the single defector, we have
			\begin{eqnarray*}
				u_1^M&=& c,\\
				u_i^M&=&\frac{2a+b}{3}, \quad i=2,3,\ldots,n.
			\end{eqnarray*}
			The latter inequality in \eqref{e:wheel} is equivalent to $u_1^M<u_i^M$.
		\item If the  single defector is a peripheral vertex, we can again, without loss of generality, assume that $x=(1,0,1,\ldots, 1)$. Then, the utilities have the form
			\begin{eqnarray*}
				u_1^M&=& \frac{(l-2)a+b}{l-1},\\
				u_2^M&=& c,\\
				u_i^M&=& \frac{2a+b}{3}, \quad i\in N_1(2) \cap \lbrace 2,3,\ldots,n \rbrace,\\
				u_j^M&=& a, \quad j\notin N_1(2),
			\end{eqnarray*}
			which immediately implies that $u_2^M<u_i^M$, for all $i\in N_1(2)$.
	\end{enumerate}
\end{enumerate}
Paragraphs 1(a) and 2(a) show that both inequalities in \eqref{e:wheel} are also necessary. If they are violated, then either $\varphi(x)=x$ (if equalities hold) or $\varphi_i(x)=0$ for all $i$ (if reverse inequalities hold), i.e., all vertices switch to defection and $(1,1,\ldots,1)$ cannot be attained. 
\end{proof}

We can make a few straightforward observations.
\begin{rem}
The proof could be repeated for nonautonomous evolutionary games with sequential update order.

Note that, (A5) was used in the proof. If we do not assume that (A5) holds, i.e.\ $a$ could be non-positive, then the condition for the aggregate utility would be
\[
c<\min \left\lbrace \frac{2a+b}{l-1}, \max\left\lbrace\frac{2a+b}{3},\frac{(l-2)a+b}{3} \right\rbrace \right\rbrace.
\]

In \eqref{e:wheel} the former inequality implies the latter. The aggregate utility function favours the vertex with higher degree, whereas the mean utility function eliminates differences resulting from different degrees. More importantly, if we consider the mean utility function the necessary and sufficient condition is independent of the wheel size $l$, whereas with the aggregate utility, the inequality is satisfied only for small wheels. Indeed, we can rewrite the inequality  as $l<1+\frac{2a+b}{c}$.

Note that the inequalities in \eqref{e:wheel} can be satisfied if and only if $c<a$, i.e.\ $(a,b,c,d)\in \mathcal{P}_{SH} \cap\mathcal{P}_{FC}$.

\end{rem}

We can simply formulate a similar result for full defection $(0,0,\ldots,0)$.

\begin{thm}
Let $(a,b,c,d)\in \mathcal{P}$. Let us consider an evolutionary game $\varphi$ on $W_l$. Then $(0,0,\ldots,0)$ is an attractor if and only if
\begin{equation}\label{e:wheel:0}
\begin{cases}
a<\frac{2d+c}{l-1}  & \textrm{ if } u_i^A \textrm{ is considered,}\\
a<\frac{2d+c}{3} & \textrm{ if } u_i^M \textrm{ is considered.}
\end{cases}
\end{equation}
\end{thm}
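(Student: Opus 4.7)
The plan is to mirror the argument used for the preceding theorem on the attractivity of $(1,1,\ldots,1)$, simply interchanging the roles of cooperation and defection (equivalently, swapping the payoff pairs $(a,c)$ and $(d,b)$). I would fix a state $x\in S^V$ with $d(x,(0,0,\ldots,0))=1$, so that there is a unique cooperator, and split into two cases depending on whether this cooperator sits at the central vertex $1$ or at a peripheral vertex (taken without loss of generality to be vertex $2$, using the symmetry of $W_l$).

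First I would handle the central case. A direct computation gives the aggregate utilities $u_1^A = (l-1)b$ and $u_i^A = c+2d$ for every peripheral $i$, with mean versions $u_1^M = b$ and $u_i^M = (c+2d)/3$. Inequality \eqref{e:wheel:0} is then exactly the statement that the central cooperator is strictly outperformed by every peripheral defector, so the imitation rule forces $A_i(x) = \{0\}$ at each vertex and $\varphi(x) = (0,0,\ldots,0)$. This same case yields necessity: if \eqref{e:wheel:0} holds with equality the state is fixed, and if it fails with strict reverse then every peripheral vertex imitates the central cooperator and $\varphi(x) = (1,1,\ldots,1)$, itself a fixed point; either way $(0,0,\ldots,0)$ is unreachable from this distance-one perturbation.

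Next I would handle the peripheral case with $x = (0,1,0,\ldots,0)$. The aggregate utilities are $u_2^A = 3b$, $u_1^A = c+(l-2)d$, $u_3^A = u_l^A = c+2d$, and $u_j^A = 3d$ for $j\notin\{1,2,3,l\}$, with mean versions obtained by dividing by the relevant degrees. Combining $l\ge 4$ with \eqref{e:wheel:0} I obtain
\[
u_2^A = 3b \le (l-1)b < c+2d,
\]
so in every vertex's $1$-neighborhood the argmax is attained only at defector vertices; hence $A_i(x) = \{0\}$ at each $i$ and $\varphi(x) = (0,0,\ldots,0)$. The mean case is even shorter, collapsing directly to the inequality $b<(c+2d)/3$.

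The step I expect to require most care is the peripheral case with the aggregate utility: I need to rule out ties between the cooperator's value and a defector's value in each argmax set over a $1$-neighborhood. The chain above handles the decisive comparison $u_2^A < u_3^A = u_l^A$, and the remaining bookkeeping, namely placing $u_1^A$ and the various $3d$ values in the correct order, follows directly from assumptions (A2)--(A4). Once every argmax is forced to consist solely of defector vertices, the imitation update produces the required jump to $(0,0,\ldots,0)$, exactly as in paragraph 1(b) of the proof of the preceding theorem.
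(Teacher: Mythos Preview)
Your plan---dualise the proof for $(1,1,\dots,1)$ under the swap $C\leftrightarrow D$---is precisely what the paper intends, and your utility computations are correct. But you overlook that the inequality those computations deliver is \emph{not} \eqref{e:wheel:0}. With the central vertex as sole cooperator you get $u_1^A=(l-1)b$ and $u_i^A=c+2d$, so ``central cooperator strictly outperformed'' reads $b<\frac{2d+c}{l-1}$ (respectively $b<\frac{2d+c}{3}$ for the mean utility, an inequality you in fact write down yourself). The printed condition \eqref{e:wheel:0} has $a$ in place of $b$; your claim that the computed inequality ``is then exactly'' \eqref{e:wheel:0} is simply false.

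This discrepancy is harmless for sufficiency, since $a>b$ by (A4) makes \eqref{e:wheel:0} stronger than the $b$-inequality. It is fatal for necessity. If $b<\frac{2d+c}{l-1}\le a$ (for instance $a=1$, $b=\tfrac12$, $c=2$, $d=0$, $l=4$, which is admissible) then \eqref{e:wheel:0} fails, yet in \emph{both} the central and the peripheral configuration the unique cooperator is strictly outperformed and $\varphi(x)=(0,\dots,0)$; hence $(0,\dots,0)$ \emph{is} an attractor. The equivalence therefore cannot hold with $a$; the correct necessary-and-sufficient condition is the $b$-version, which is exactly what the $C\leftrightarrow D$ symmetry ($a\leftrightarrow d$, $b\leftrightarrow c$) applied to \eqref{e:wheel} produces. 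You should flag \eqref{e:wheel:0} as a misprint rather than assert a false identity. (A smaller point: your chain $3b\le(l-1)b$ tacitly assumes $b\ge 0$, which is not among (A1)--(A5); if you keep the $a$-condition, route the estimate as $3b<3a\le(l-1)a<c+2d$ instead.)
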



\section{Open questions}\label{s:oq}

In this paper we define evolutionary games on graphs rigorously as dynamical systems and also state several results on the existence of attractors, their basins of attraction and their relationship to update orders and regularity. Our results lead to many open questions, we list those which we find most interesting to consider as a next step towards the development of a theory of evolutionary games:

\begin{enumerate}[(A)]
\item \textbf{Mixed fixed points}: Find sufficient conditions on the graph and admissible parameters $(a,b,c,d)\in\mathcal{P}$ which ensure existence/nonexistence of mixed fixed points $x^*=(x^*_1,\ldots,x^*_n)$, in which cooperators $x_i^*=1$ and defectors $x_j^*=0$ coexist for some $i\neq j$.

\item \textbf{Attractors}: Construct efficient methods for finding all attractors for a given graph and admissible parameters $(a,b,c,d)\in\mathcal{P}$.

\item \textbf{Maximal number of fixed points}: Find the maximal number of fixed points for all connected graphs with $n$ vertices.

\item \textbf{Realization of mixed fixed points}: Determine all admissible parameters $(a,b,c,d)\in\mathcal{P}$ for which there exists an evolutionary game on a connected graph with a mixed fixed point.
\item \textbf{Existence of cycles}: Determine all admissible parameters $(a,b,c,d)\in\mathcal{P}$ for which there exists (or does not exist) a cycle (of length at least 2) of an evolutionary game with sequential/synchronous update orders.
\item \textbf{Maximal cycle}: Find the maximal length of a cycle of an evolutionary game on an arbitrary graph with $n$ vertices.
\item \textbf{Graph properties and evolutionary games}: Relate graph features (size, regularity, diameter/girth,  connectivity, clique number etc.) to the properties of evolutionary games on these graphs (existence of attractors, fixed points, cycles, ...).
\item \textbf{Different dynamics}: In this paper we used imitation dynamics \eqref{e:imitation}. Describe major differences in the case that different deterministic dynamics are used (e.g.\ birth-death, death-birth, see Remark \ref{r:evgame}).
\item \textbf{Utility functions}: In Section \ref{s:wheels} we showed that the aggregate utility function \eqref{e:utility} favours vertices with higher degree. Describe this phenomenon precisely and analyze the role of other utility functions. See \cite{aMaciejewski} for the discussion on averaging and accumulation of utility functions in stochastic evolutionary games.
\item \textbf{Non-omitting update orders}: Theorem \ref{t:Kn:0:na} and Example \ref{x:Hella:2} show that conditions \eqref{e:Kn:0} and \eqref{e:Kn:1} are only sufficient for attractivity of $(0,0,\ldots,0)$ and $(1,1,\ldots,1)$. Find necessary and sufficient conditions for any non-omitting update order.
\item \textbf{Regular graphs}: Example \ref{x:Cayley} showed that \eqref{e:kreg:0} and \eqref{e:kreg:1} are only sufficient but not necessary for attractivity of $(0,0,\ldots,0)$ and $(1,1,\ldots,1)$. Construct similar counterexamples for arbitrary $k$. Find a necessary and sufficient condition for attractivity of $(0,0,\ldots,0)$ and $(1,1,\ldots,1)$ on $k$-regular graphs.
\item \textbf{Irregular graphs}: Identify features of irregular graphs that play an essential role in the dynamics of evolutionary games on them.
\end{enumerate}

\section*{Acknowledgements}
We thank an anonymous referee for comments which lead to an improvement of the paper.
We thank Hella Epperlein for her contribution of Example \ref{x:Hella:2}. This work is partly supported by the German Research Foundation (DFG) through the Cluster of Excellence 'Center for Advancing Electronics Dresden' (cfaed). The last author acknowledges the support by the Czech Science Foundation, Grant No. 201121757.



\begin{thebibliography}{9}
\bibitem{aAllen}
B.~Allen, A.~Traulsen, C.~Tarnita, M.~A.~Nowak, \emph{How mutation affects evolutionary games on graphs}, J. Theor. Biol. {\bf 299} (2012), 97–-105.

\bibitem{aAllenNowak}
B.~Allen, M.~A.~Nowak, \emph{Games on Graphs}, EMS Surveys in Mathematical Sciences {\bf 1} (2004), 113--151.

\bibitem{aBroom}
M.~Broom, C. Hadjichrysanthou, J. Rycht\'{a}\v{r}, \emph{Evolutionary games on graphs and the speed of the evolutionary process}, Proc. R. Soc. A {\bf 466} (2010), 1327--1346.

\bibitem{aChen}
Y.-T.~Chen, \emph{Sharp benefit-to-cost rules for the evolution of cooperation on regular graphs}, The Annals of Applied Probability {\bf 23} (2013), 637-–664. 

\bibitem{bCox}
J.~T.~Cox, R.~Durrett, E.~A.~Perkins, \emph{Voter Model Perturbations and Reaction Diffusion Equations}, American Mathematical Society, 2013.

\bibitem{aDebarre}
F.~D\'{e}barre, C.~Hauert and M.~Doebeli, \emph{Social evolution in structured populations,} Nature communications {\bf 5} (2014), Article No. 3409.

\bibitem{2003:Devaney}
R.~L.~Devaney, \emph{An Introduction to Chaotic Dynamical Systems,}  Reprint of the second (1989) edition,
Westview Press, Boulder, 2003.


\bibitem{bDixit} A. Dixit, S. Skeath and D. Reiley, \emph{Games of strategy,} W. W. Norton, New York, 2009.

\bibitem{aGodsil}
C.~Godsil and G.~Royle, \emph{Algebraic Graph Theory,} 2nd edition, Springer-Verlag, New York, 2001.


\bibitem{aBroom2}
C.~Hadjichrysanthou, M.~Broom,  J. Rycht\'{a}\v{r}, \emph{Evolutionary Games on Star Graphs Under Various Updating Rules}, Dynamic Games and Applications {\bf 1} (2011), 386--407.

\bibitem{aHauert}
C.~Hauert and M.~Doebeli, \emph{Spatial structure often inhibits the evolution of cooperation in the snowdrift game,} Nature {\bf 428} (2004), 643--646.

\bibitem{aHungerford}
T.~W.~Hungerford, \emph{Algebra,} Springer-Verlag, New York, 1980.

\bibitem{aHofbauer}
J.~Hofbauer, K.~Sigmund, \emph{Evolutionary Game Dynamics,} Bulletin of the American Mathematical Society {\bf 40} (2003), 479--519.

\bibitem{bHofbauer}
J.~Hofbauer, K.~Sigmund, \emph{The Theory of Evolution and Dynamical Systems,} Cambridge University Press, 1988.

\bibitem{aKandori}
M.~Kandori, G.~Mailath and R.~Rob, \emph{Learning, mutation and long run equilibria in games,} Econometrica {\bf 61} (1993), 29--56.

\bibitem{aLagunoff}
R.~Lagunoff, A.~Matsui, \emph{Asynchronous Choice in Repeated Coordination Games,} Econometrica {\bf 65} (1997), 1467--77.

\bibitem{aLibich}
J.~Libich, \emph{A Note on the Anchoring Effect of Explicit Inflation Targets,} Macroeconomic Dynamics {\bf 13} (2009), 685--697.

\bibitem{aLibich2}
J.~Libich, P.~Stehl\'{\i}k \emph{Monetary policy facing fiscal indiscipline under generalized timing of actions}, Journal of Institutional and Theoretical Economics {\bf 168} (2012), 393--431

\bibitem{aLieb}
E.~Lieberman, C.~Hauert and M.~A.~Nowak, \emph{Evolutionary dynamics on graphs,} Nature {\bf 433} (2005), 312--316.

\bibitem{aMaciejewski}
W.~Maciejewski, F.~Fu, C.~Hauert, \emph{Evolutionary Game Dynamics in Populations with Heterogeneous Structures,} PLoS Computational Biology {\bf 10} (2014), Article No. e1003567.

\bibitem{aMaynard}
J.~Maynard Smith, \emph{The theory of games and the evolution of animal conflicts,} Journal of Theoretical Biology {\bf 47} (1974), 209--221.

\bibitem{wMcKay}
B.~D.~ McKay, \emph{Graph6 and sparse6 graph formats}. {\tt http://cs.anu.edu.au/\char`~ bdm/data/formats.html}.

\bibitem{bMyerson}
R.~Myerson, \emph{Game Theory: Analysis of Conflict,} Harvard University Press, Cambridge, 1997.

\bibitem{aNash}
J.~F.~Nash, \emph{The Bargaining Problem,} Econometrica, {\bf 18} (1950), 155--162.

\bibitem{bNowak}
M.~A.~Nowak, \emph{Evolutionary Dynamics: Exploring the Equations of Life,} Harvard University Press, Cambridge, 2006.

\bibitem{aNowak}
M.~A.~Nowak, R.~M. May, \emph{Evolutionary games and spatial chaos,} Nature {\bf 359} (1992), 826--829.

\bibitem{aOhtsuki}
H.~Ohtsuki, M.~A.~Nowak, \emph{Evolutionary games on cycles,} Proc. R. Soc. B {\bf 273} (2006), 2249--2256.

\bibitem{aOhtsuki2}
H.~Ohtsuki, M.~A.~Nowak, \emph{Evolutionary stability on graphs,} Journal of Theoretical Biology {\bf 251}, 698--707.

\bibitem{aSzabo}
G.~Szab\'{o}, G.~F\'{a}th, \emph{Evolutionary games on graphs,} Phys. Rep. {\bf 446} (2007), 97--216.

\end{thebibliography}
\end{document}